\theoremstyle{definition}
\newtheorem{theorem}{Theorem}[section]
\newtheorem{proposition}[theorem]{Proposition}
\newtheorem{corollary}[theorem]{Corollary}
\newtheorem{definition}[theorem]{Definition}
\newtheorem{remark}[theorem]{Remark}
 \numberwithin{equation}{section}
\numberwithin{equation}{section}
\newcommand{\di}{\displaystyle}
\newcommand{\mb}{\mathbb}
\newcommand{\mf}{\mathbf}
\newcommand{\bg}{\boldsymbol{\gamma}}
\newcommand{\tbg}{\widetilde{\boldsymbol{\gamma}}}
\newcommand{\rvline}{\hspace*{-\arraycolsep}\vline\hspace*{-\arraycolsep}}
\begin{document}
  
\title[Generalized Curvatures of Curves in $\mb{R}^n$]{Generalized Curvatures of Curves in $\pmb{\mb{R}^n}$}
 \author{Lee-Peng Teo}
\address{Department of Mathematics, Xiamen University Malaysia\\Jalan Sunsuria, Bandar Sunsuria, 43900, Sepang, Selangor, Malaysia.}
\email{lpteo@xmu.edu.my}
\begin{abstract}
For a curve $\bg:I\to\mb{R}^n$ of order $n-1$, we prove that the generalized curvatures $\kappa_1, \ldots, \kappa_{n-1}$ can be expressed in terms of the leading principal minors of the matrix $\mf{A}(t)^T\mf{A}(t)$, where $\mf{A}(t)$ is the $n\times n$ matrix whose $i$-th column is $\bg^{(i)}(t)$. This gives an efficient algorithm to calculate the curvatures.
 
\end{abstract}
\subjclass[2020]{53A04, 53-08}
\keywords{Curves, Frenet-Serret frames, curvatures}
\maketitle

\section{Introduction}
 
  A  parametrized curve  in $\mb{R}^n$  is a smooth function $\bg: I \to\mb{R}^n$ defined on an open interval $I$, such that $\bg'(t)\neq \mf{0}$ for all $t\in I$.  The image of $\bg$ is a one-dimensional Riemannian manifold with metric induced by  the Euclidean metric of $\mb{R}^n$.
 
 If $\bg:I\to\mb{R}^n$ is a  parametrized curve such that the vectors $\bg'(t), \ldots, \bg^{(n-1)}(t)$ are linearly independent, we say that the curve has order $n-1$. For such a curve, one can define the Frenet frame $\{\mf{T}(t), \mf{N}_1(t), \ldots, \mf{N}_{n-1}(t)\}$ for each $t\in I$ using the Gram-Schmidt process and the generalized cross product. The   matrix 
  \[\mf{F}=\begin{bmatrix}\mf{T}(t)& \rvline & \mf{N}_1(t)& \rvline & \cdots & \rvline & \mf{N}_{n-1}(t)\end{bmatrix}\] is an orthogonal matrix with determinant 1. The Frenet-Serret formulas say that with respect to the arclength parameter $s$,
\begin{align*}
\frac{d\mf{T}}{ds}&=\kappa_1 \mf{N}_1,\\
\frac{d\mf{N}_1}{ds}&=-\kappa_1\mf{T}+\kappa_2\mf{N}_2,\\
&\hspace{1cm}\vdots\\
\frac{d\mf{N}_{n-2}}{ds}&=-\kappa_{n-2}\mf{N}_{n-3}+\kappa_{n-1}\mf{N}_{n-1},\\
\frac{d\mf{N}_{n-1}}{ds}&=-\kappa_{n-1}\mf{N}_{n-2}.
\end{align*} The numbers    $\kappa_1, \ldots, \kappa_{n-1}$ are the generalized curvatures of the curve.  

In the classical case where $n=3$, $\kappa_1=\kappa$  and $\kappa_2=\tau$ are respectively  the curvature and the torsion of the curve. The fundamental theorem of the local theory of curves asserts that $\kappa$ and $\tau$ uniquely determine the   curve up to a direct isometry of $\mb{R}^3$. 

Given a space curve $\bg:I\to\mb{R}^3$ in any parametrization, the curvature $\kappa$ and the torsion $\tau$ can be computed by the formulas
\begin{equation}\label{20250707_1}\kappa=\frac{\Vert\bg'\times\bg''\Vert}{\Vert \bg'\Vert^3},\hspace{1cm}\tau=\frac{\left\langle\bg'\times\bg'',\bg'''\right\rangle}{\Vert \bg'\times\bg''\Vert^2}.\end{equation}

In this work, we establish an efficient algorithmn to compute the generalized curvatures $\kappa_1, \ldots,\kappa_{n-1}$ of a parametrized curve $\bg:I\to\mb{R}^n$ of order $n-1$ in terms of  $\bg'(t), \ldots, \bg^{(n)}(t)$,  generalizing the formulas \eqref{20250707_1} to higher dimensions. 
Let $\mf{A}(t)$ be the matrix
\[\mf{A}(t)=\begin{bmatrix}\bg'(t) &\rvline &\bg''(t) & \cdots & \rvline & \bg^{(n)}(t)\end{bmatrix}.\] For $1 \leq i\leq n$, define $\mf{M}_i(t)$ as the $i\times i$ matrix 
\[\mf{M}_i(t)=\begin{bmatrix}\langle\bg'(t),\bg'(t) \rangle& \cdots & \langle\bg'(t), \bg^{(i)}(t)\rangle\\
\vdots & \ddots &\vdots\\ \langle\bg^{(i)}(t),\bg'(t) \rangle& \cdots & \langle\bg^{(i)}(t), \bg^{(i)}(t)\rangle\end{bmatrix},\]which consists of the first $i$ rows and first $i$ columns of the matrix $\mf{B}(t)=\mf{A}(t)^T\mf{A}(t)$. We   show that  for $1\leq i\leq n-1$, 
\begin{equation}\label{20250712_1}\kappa_i(t)^2=\frac{  \det \mf{M}_{i+1}(t) \det \mf{M}_{i-1}(t)}{\left[\Vert\bg'(t)\Vert \det\mf{M}_i(t)\right]^2}.\end{equation}From this, we can obtain $\kappa_i(t)$ using the fact that
   for $1\leq i\leq n-2$, $\kappa_i(t)$ is positive. For  $i=n-1$, $\kappa_{n-1}(t)$ has the same sign as $\det\mf{A}(t)$, and $\left(\det \mf{A}(t)\right)^2=\det\mf{M}_n(t)$.

 \bigskip
\section{Preliminaries}

 First, we give a brief revision of the linear algebra on the Euclidean space $\mb{R}^n$, fixing the notations.
A vector $\mf{v}$ in $\mb{R}^n$ is denoted as  \[\mf{v}=(v_1, \ldots, v_n)\hspace{1cm}\text{or}\hspace{1cm} \mf{v}=\begin{bmatrix} v_1\\\vdots\\v_n\end{bmatrix}.\]In terms of the standard unit vectors $\mf{e}_1, \ldots, \mf{e}_n$,
$\mf{v}=v_1\mf{e}_1+\cdots+v_n\mf{e}_n$.
 If $\mf{u}=(u_1, \ldots, u_n)$ and $\mf{v}=(v_1, \ldots, v_n)$ are two vectors in $\mb{R}^n$, their Euclidean inner product is
   \[\langle \mf{u},\mf{v}\rangle=\sum_{i=1}^nu_iv_i=\mf{u}^T\mf{v}=\mf{v}^T\mf{u}.\]
  The norm of the vector $\mf{v}$ is defined as
  \[\Vert\mf{v}\Vert=\sqrt{\langle\mf{v},\mf{v}\rangle}.\]
  A basis $\{\mf{v}_1, \ldots,\mf{v}_n\}$ of $\mb{R}^n$ is said to be positively oriented if
  \[\det\begin{bmatrix}\mf{v}_1 &\rvline & \cdots &\rvline &\mf{v}_n\end{bmatrix}>0.\]
  
  Given a linearly independent set $\{\mf{v}_1, \ldots, \mf{v}_r\}$ in $\mb{R}^n$, the Gram-Schidmt process produces an orthonormal set $\{\mf{u}_1,\ldots, \mf{u}_r\}$ such that for $1\leq j\leq r$,
  \[\text{span} \{\mf{u}_1, \ldots, \mf{u}_j\}=\text{span} \{\mf{v}_1, \ldots, \mf{v}_j\}.\]
  If
$R_{i,j}=\langle \mf{u}_i,\mf{v}_j$, then $\mf{R}=[R_{i,j}]$ is an upper 
triangular matrix with positive diagonal entries, and
\[\begin{bmatrix}\mf{v}_1 &\rvline & \cdots &\rvline & \mf{v}_r\end{bmatrix}=\begin{bmatrix}\mf{u}_1 &\rvline & \cdots &\rvline & \mf{u}_r\end{bmatrix}\mf{R}.\] 
 
  Classically, the cross product is defined for two vectors in $\mb{R}^3$. It can be generalized in the following way.

 \begin{definition}[Generalized Cross Product]
For $n\geq 2$,   the  {\it generalized cross product} of  $n-1$ vectors in $\mb{R}^n$ is  a $(n-1)$-linear map $\mathscr{P}:(\mb{R}^n)^{n-1}\to\mb{R}^n$.
Given  $n-1$ vectors $\mf{v}_j=(v_{1,j}, v_{2,j}, \ldots, v_{n,j})$, $1\leq j\leq n-1$, their cross product $\mathscr{P}(\mf{v}_1, \ldots,\mf{v}_{n-1})$ is the vector 
\[\mathscr{P}(\mf{v}_1, \ldots,\mf{v}_{n-1})=\det\begin{bmatrix} \mf{v}_{1} &\rvline & \cdots & \rvline &\mf{v}_{n-1} &\rvline& \begin{matrix}\mf{e}_1 \\ \vdots\\\mf{e}_n\end{matrix}\end{bmatrix}=\det\begin{bmatrix} v_{1,1} & \ldots & v_{1,n-1} & \mf{e}_1\\v_{2,1} & \ldots & v_{2,n-1}&\mf{e}_2\\\vdots & \ddots & \vdots & \vdots\\v_{n,1} & \ldots & v_{n, n-1} &\mf{e}_n\end{bmatrix}.\]
\end{definition}In the determinant, $\mf{e}_1, \ldots, \mf{e}_n$ are treated as formal symbols. The components of the cross product $\mathscr{P}(\mf{v}_1, \ldots,\mf{v}_{n-1})$ along $\mf{e}_1, \ldots,\mf{e}_n$ are obtained by computing this determinant using column expansion with respect to the last column. 

By the properties of determinants, the  $(n-1)$-linear map $\mathscr{P}:(\mb{R}^n)^{n-1}\to\mb{R}^n$ is an alternating linear mapping. Namely, if $\sigma$ is a permutation of the set $\{1,2,\ldots, n-1\}$, then 
\[\mathscr{P}\left(\mf{v}_{\sigma(1)}, \mf{v}_{\sigma(2)}, \ldots, \mf{v}_{\sigma(n-1)}\right)=\text{sgn} (\sigma) \mathscr{P}(\mf{v}_1, \ldots,\mf{v}_{n-1}).\]

When $n=3$, $\mathscr{P}(\mf{u}, \mf{v})$ is just the ordinary cross product of $\mf{u}$ and $\mf{v}$.

The generalized cross product has certain properties whose proofs are straightforward generalizations of those for the $n=3$ case.
\begin{proposition}\label{20250707_3}
Let $n\geq 2$ and let $\mf{v}_1$, $\ldots$, $\mf{v}_{n-1}$ be vectors in $\mb{R}^n$. Then the generalized cross product $\mathscr{P}(\mf{v}_1, \ldots, \mf{v}_{n-1})$ is zero if and only if the set $\{\mf{v}_1, \ldots, \mf{v}_{n-1}\}$ is linearly dependent.
\end{proposition}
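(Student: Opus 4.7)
The plan is to establish a single bridge identity and then read off both directions of the ``if and only if'' from it. Concretely, I would first prove that for every $\mf{w}=(w_1,\ldots,w_n)\in\mb{R}^n$,
\[
\langle \mathscr{P}(\mf{v}_1, \ldots, \mf{v}_{n-1}), \mf{w}\rangle = \det\begin{bmatrix}\mf{v}_1 & \rvline & \cdots &\rvline & \mf{v}_{n-1} &\rvline & \mf{w}\end{bmatrix}.
\]
This follows immediately from the definition of $\mathscr{P}$: expanding the formal determinant in the definition along the last column writes $\mathscr{P}(\mf{v}_1,\ldots,\mf{v}_{n-1})=\sum_k C_k\mf{e}_k$ where $C_k$ is the $(k,n)$-cofactor; pairing with $\mf{w}=\sum_k w_k\mf{e}_k$ and comparing with the cofactor expansion of the ordinary determinant above gives the identity.

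Once this identity is in hand, both implications are short. For the ``linearly dependent $\Rightarrow$ zero'' direction, linear dependence of $\{\mf{v}_1,\ldots,\mf{v}_{n-1}\}$ means that the matrix on the right has rank at most $n-1$ regardless of $\mf{w}$, so its determinant vanishes for every $\mf{w}$; taking $\mf{w}=\mf{e}_k$ for $k=1,\ldots,n$ forces every component of $\mathscr{P}(\mf{v}_1,\ldots,\mf{v}_{n-1})$ to be zero.

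For the converse, I would argue contrapositively. If $\{\mf{v}_1,\ldots,\mf{v}_{n-1}\}$ is linearly independent, extend it to a basis $\{\mf{v}_1,\ldots,\mf{v}_{n-1},\mf{w}\}$ of $\mb{R}^n$; then the determinant on the right is nonzero for that particular $\mf{w}$, so by the identity $\langle\mathscr{P}(\mf{v}_1,\ldots,\mf{v}_{n-1}),\mf{w}\rangle\neq 0$, and in particular $\mathscr{P}(\mf{v}_1,\ldots,\mf{v}_{n-1})\neq\mf{0}$.

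I do not expect a genuine obstacle here; the only subtlety is the bookkeeping in justifying the bridge identity, since the definition uses $\mf{e}_1,\ldots,\mf{e}_n$ as formal symbols in the last column. The cleanest way is to observe that both sides of the identity are multilinear in $(\mf{v}_1,\ldots,\mf{v}_{n-1},\mf{w})$ and agree when the arguments are standard basis vectors, so they agree in general; alternatively one can just compare the cofactor expansions directly, as outlined above.
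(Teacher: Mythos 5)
Your proof is correct. The paper itself gives no argument for this proposition (it declares the proofs straightforward generalizations of the $n=3$ case), so there is nothing to clash with; your route via the pairing identity $\langle \mathscr{P}(\mf{v}_1,\ldots,\mf{v}_{n-1}),\mf{w}\rangle=\det\begin{bmatrix}\mf{v}_1 &\rvline& \cdots &\rvline& \mf{v}_{n-1} &\rvline& \mf{w}\end{bmatrix}$ is exactly the natural one. Note only that this bridge identity is precisely the paper's Proposition \ref{20250710_5}, stated immediately after, so you could cite it rather than re-derive it; both directions then follow as you say (take $\mf{w}=\mf{e}_k$ for the dependent case, and extend an independent set to a basis for the converse).
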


\begin{proposition} \label{20250710_5}  Let $n\geq 2$, and let $\mf{v}_1, \ldots, \mf{v}_{n-1}, \mf{v}_n$ be vectors in $\mb{R}^n$. Assume that
  $\mf{v}_j=(v_{1,j}, \ldots, v_{n,j})$ for $1\leq j\leq n$. Then
\[\langle \mathscr{P}( \mf{v}_1, \ldots, \mf{v}_{n-1}), \mf{v}_n\rangle =\det \begin{bmatrix} \mf{v}_{1} &\rvline & \cdots & \rvline &\mf{v}_{n-1} &\rvline&\mf{v}_n\end{bmatrix}=\det\begin{bmatrix} v_{i,j}\end{bmatrix}.\]

\end{proposition}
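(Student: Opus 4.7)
The plan is to unwind the definition of the generalized cross product and recognize the resulting inner product as a cofactor expansion of the $n\times n$ determinant along its last column.

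First I would expand the formal determinant defining $\mathscr{P}(\mf{v}_1,\ldots,\mf{v}_{n-1})$ along its last column. Since $\mf{e}_1,\ldots,\mf{e}_n$ are the entries of this column, the resulting expression is
\[\mathscr{P}(\mf{v}_1,\ldots,\mf{v}_{n-1})=\sum_{i=1}^n (-1)^{i+n}\,M_i\,\mf{e}_i,\]
where $M_i$ denotes the $(n-1)\times(n-1)$ minor obtained by deleting the $i$-th row and the last column from the matrix $[\mf{v}_1|\cdots|\mf{v}_{n-1}|*]$. In other words, the $i$-th component of the cross product is the cofactor $(-1)^{i+n}M_i$ of the $(i,n)$-entry in any $n\times n$ matrix whose first $n-1$ columns are $\mf{v}_1,\ldots,\mf{v}_{n-1}$.

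Next I would take the Euclidean inner product of this vector with $\mf{v}_n=(v_{1,n},\ldots,v_{n,n})$, which yields
\[\langle\mathscr{P}(\mf{v}_1,\ldots,\mf{v}_{n-1}),\mf{v}_n\rangle=\sum_{i=1}^n (-1)^{i+n}\,v_{i,n}\,M_i.\]
The right-hand side is precisely the Laplace cofactor expansion, along the last column, of the $n\times n$ matrix $[\mf{v}_1|\cdots|\mf{v}_{n-1}|\mf{v}_n]=[v_{i,j}]$. Hence it equals $\det[v_{i,j}]$, as required.

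There is essentially no hard step here: the only thing to be careful about is bookkeeping the sign $(-1)^{i+n}$ so that the minors $M_i$ appearing in the cross product match exactly the cofactors appearing in the column expansion of the full determinant. Once the two sign conventions are aligned, the identity is immediate from the definition of the determinant by cofactor expansion.
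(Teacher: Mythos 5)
Your proof is correct and is exactly the argument the paper has in mind: the paper defines $\mathscr{P}(\mf{v}_1,\ldots,\mf{v}_{n-1})$ by last-column expansion, leaves this proposition as a straightforward generalization of the $n=3$ case, and your cofactor bookkeeping (the $i$-th component being $(-1)^{i+n}M_i$, so that pairing with $\mf{v}_n$ reproduces the Laplace expansion of $\det[v_{i,j}]$ along its last column) is precisely that standard argument. No gaps.
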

 
Proposition \ref{20250710_5} gives the following important properties of cross product.
\begin{corollary}\label{20250710_6}
Let $n\geq 2$.
If $\mf{v}_1, \ldots, \mf{v}_{n-1}$ are vectors in $\mb{R}^n$, the cross product $\mathscr{P}(\mf{v}_1, \ldots, \mf{v}_{n-1})$ is orthogonal to each of the vectors $\mf{v}_1, \ldots \mf{v}_{n-1}$.
\end{corollary}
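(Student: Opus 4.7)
The plan is to deduce this immediately from Proposition \ref{20250710_5}. The idea is that orthogonality of $\mathscr{P}(\mf{v}_1,\ldots,\mf{v}_{n-1})$ to each $\mf{v}_k$ with $1\leq k\leq n-1$ amounts to showing that the inner product $\langle \mathscr{P}(\mf{v}_1,\ldots,\mf{v}_{n-1}),\mf{v}_k\rangle$ vanishes for every such $k$.

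First, I would fix an index $k$ with $1\leq k\leq n-1$ and apply Proposition \ref{20250710_5} with the role of $\mf{v}_n$ played by $\mf{v}_k$ itself. This yields
\[
\langle \mathscr{P}(\mf{v}_1,\ldots,\mf{v}_{n-1}),\mf{v}_k\rangle=\det\begin{bmatrix}\mf{v}_1 &\rvline & \cdots &\rvline & \mf{v}_{n-1} &\rvline & \mf{v}_k\end{bmatrix}.
\]
Next, I would observe that the $n\times n$ matrix on the right has two identical columns, namely column $k$ and column $n$, both equal to $\mf{v}_k$. By the standard alternating property of the determinant, a matrix with two equal columns has determinant zero.

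Combining these two observations gives $\langle \mathscr{P}(\mf{v}_1,\ldots,\mf{v}_{n-1}),\mf{v}_k\rangle=0$ for each $k\in\{1,\ldots,n-1\}$, which is exactly the claim. There is no real obstacle here: the content of the corollary has already been packaged into Proposition \ref{20250710_5}, and the proof is just the two-equal-columns trick. The only thing to be careful about is invoking the correct formulation of Proposition \ref{20250710_5}, i.e.\ recognizing that it is valid for \emph{any} choice of the final vector, including a repeated one.
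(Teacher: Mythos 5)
Your proof is correct and follows exactly the route the paper intends: the corollary is stated as an immediate consequence of Proposition \ref{20250710_5}, obtained by taking $\mf{v}_n=\mf{v}_k$ and noting the resulting determinant has two equal columns. Nothing further is needed.
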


\begin{corollary}
Let $n\geq 2$, and let $\mf{v}_1, \ldots, \mf{v}_{n-1}$ be vectors in $\mb{R}^n$. Denote by $\mf{w}=\mathscr{P}(\mf{v}_1, \ldots, \mf{v}_{n-1})$   their cross product. If $\{\mf{v}_1, \ldots, \mf{v}_{n-1}\}$ is a linearly independent set, then $\{\mf{v}_1, \ldots, \mf{v}_{n-1}, \mf{w}\}$ is  a positively oriented basis of $\mb{R}^n$.
\end{corollary}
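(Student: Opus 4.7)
The plan is to combine the three results just proved about the generalized cross product: the characterization of when $\mathscr{P}$ vanishes (Proposition \ref{20250707_3}), its orthogonality to each factor (Corollary \ref{20250710_6}), and the determinant identity $\langle \mathscr{P}(\mf{v}_1,\ldots,\mf{v}_{n-1}),\mf{v}_n\rangle=\det[\mf{v}_1|\cdots|\mf{v}_n]$ (Proposition \ref{20250710_5}). All three pieces are essentially already doing the work; the corollary is just the packaging.

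First I would argue that $\{\mf{v}_1,\ldots,\mf{v}_{n-1},\mf{w}\}$ is a basis of $\mb{R}^n$. Since $\{\mf{v}_1,\ldots,\mf{v}_{n-1}\}$ is linearly independent, Proposition \ref{20250707_3} gives $\mf{w}\neq\mf{0}$. By Corollary \ref{20250710_6}, $\mf{w}$ is orthogonal to every $\mf{v}_i$, hence $\mf{w}$ does not lie in $\text{span}\{\mf{v}_1,\ldots,\mf{v}_{n-1}\}$ (otherwise $\mf{w}$ would be orthogonal to itself, forcing $\mf{w}=\mf{0}$). Adjoining $\mf{w}$ to the linearly independent set $\{\mf{v}_1,\ldots,\mf{v}_{n-1}\}$ therefore yields $n$ linearly independent vectors in $\mb{R}^n$, i.e., a basis.

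Next I would verify positive orientation by computing the relevant determinant directly. Applying Proposition \ref{20250710_5} with $\mf{v}_n$ replaced by $\mf{w}$ gives
\[\det\begin{bmatrix}\mf{v}_1&\rvline&\cdots&\rvline&\mf{v}_{n-1}&\rvline&\mf{w}\end{bmatrix}=\langle\mathscr{P}(\mf{v}_1,\ldots,\mf{v}_{n-1}),\mf{w}\rangle=\langle\mf{w},\mf{w}\rangle=\Vert\mf{w}\Vert^2.\]
Since $\mf{w}\neq\mf{0}$, this quantity is strictly positive, so the basis is positively oriented by definition.

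There is no genuine obstacle here: the entire statement is a one-line consequence of Proposition \ref{20250710_5} once $\mf{w}\neq\mf{0}$ is secured. The only small point requiring care is the step that rules out $\mf{w}\in\text{span}\{\mf{v}_1,\ldots,\mf{v}_{n-1}\}$, and that is immediate from orthogonality together with $\mf{w}\neq\mf{0}$.
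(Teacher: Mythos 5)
Your proof is correct and follows the same route the paper intends: the corollary is stated as an immediate consequence of Proposition \ref{20250710_5}, and your computation $\det\begin{bmatrix}\mf{v}_1&\rvline&\cdots&\rvline&\mf{v}_{n-1}&\rvline&\mf{w}\end{bmatrix}=\langle\mf{w},\mf{w}\rangle=\Vert\mf{w}\Vert^2>0$ (with $\mf{w}\neq\mf{0}$ from Proposition \ref{20250707_3}) is exactly the intended argument. Note only that the positivity of this determinant already forces the $n$ vectors to be linearly independent, so your first paragraph establishing the basis property via orthogonality, while correct, is not strictly needed.
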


For the norm of the vector $\mathscr{P}(\mf{v}_1, \ldots, \mf{v}_{n-1})$, we first prove the following general formula.
\begin{proposition}
\label{20250707_4} Let $n\geq 2$. 
If $\{\mf{u}_1, \ldots, \mf{u}_{n-1}\}$ and $\{\mf{v}_1, \ldots, \mf{v}_{n-1}\}$ are two sets of vectors in $\mb{R}^n$, let
\[\mf{u}=\mathscr{P}(\mf{u}_1, \ldots, \mf{u}_{n-1}),\hspace{1cm}\mf{v}=\mathscr{P}(\mf{v}_1, \ldots, \mf{v}_{n-1}).\] Then
\[\langle \mf{u}, \mf{v} \rangle  =\det\begin{bmatrix}\langle \mf{u}_i,\mf{v}_j\rangle\end{bmatrix}=\det\begin{bmatrix}\langle\mf{u}_1,\mf{v}_1\rangle & \cdots & \langle\mf{u}_1, \mf{v}_{n-1}\rangle\\
 \vdots & \ddots &\vdots\\ \langle\mf{u}_{n-1},\mf{v}_1\rangle & \cdots & \langle\mf{u}_{n-1}, \mf{v}_{n-1}\rangle\end{bmatrix}.\]
\end{proposition}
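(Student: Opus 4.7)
My plan is to form two auxiliary $n\times n$ matrices whose product $\mf{U}^T\mf{V}$ acquires a block-diagonal structure, courtesy of the orthogonality built into the cross product, and then to compute $\det(\mf{U}^T\mf{V})$ in two different ways.

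Specifically, I will set $\mf{U}=[\mf{u}_1|\cdots|\mf{u}_{n-1}|\mf{v}]$ and $\mf{V}=[\mf{v}_1|\cdots|\mf{v}_{n-1}|\mf{u}]$. By Corollary \ref{20250710_6}, $\mf{u}\perp\mf{u}_i$ and $\mf{v}\perp\mf{v}_j$ for every $i,j\in\{1,\ldots,n-1\}$, so a direct entry-wise computation gives
\[\mf{U}^T\mf{V}=\begin{bmatrix} G & \mf{0} \\ \mf{0} & \langle\mf{u},\mf{v}\rangle \end{bmatrix},\qquad G=\bigl[\langle\mf{u}_i,\mf{v}_j\rangle\bigr]_{1\le i,j\le n-1}.\]
From this block form, $\det(\mf{U}^T\mf{V})=\langle\mf{u},\mf{v}\rangle\,\det G$. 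On the other hand, two applications of Proposition \ref{20250710_5} yield $\det\mf{U}=\det\mf{V}=\langle\mf{u},\mf{v}\rangle$, so $\det(\mf{U}^T\mf{V})=\langle\mf{u},\mf{v}\rangle^2$. Equating these and cancelling one factor of $\langle\mf{u},\mf{v}\rangle$ gives the desired identity in the generic case $\langle\mf{u},\mf{v}\rangle\ne 0$.

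The main obstacle is the degenerate case $\langle\mf{u},\mf{v}\rangle=0$, where the cancellation is invalid; I plan to handle it directly. If $\mf{u}=\mf{0}$ or $\mf{v}=\mf{0}$, then Proposition \ref{20250707_3} forces the corresponding family to be linearly dependent, so the $n\times(n-1)$ matrix whose columns are that family is rank-deficient and $G$, being a product of such a matrix with another, is singular. If instead $\mf{u},\mf{v}\ne\mf{0}$, then $\mf{v}\perp\mf{u}$ together with Corollary \ref{20250710_6} and a dimension count forces $\mf{v}\in\mathrm{span}\{\mf{u}_1,\ldots,\mf{u}_{n-1}\}$ (both this span and $\mf{u}^\perp$ have dimension $n-1$ and the former sits inside the latter); writing $\mf{v}=\sum_i c_i\mf{u}_i$ with nontrivial coefficients and invoking $\langle\mf{v},\mf{v}_j\rangle=0$ yields $\sum_i c_i\langle\mf{u}_i,\mf{v}_j\rangle=0$ for every $j$, i.e., a nontrivial linear relation among the rows of $G$, whence $\det G=0$. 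In every subcase both sides of the claimed identity vanish, completing the proof.
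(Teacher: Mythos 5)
Your argument is correct, and it follows the same basic strategy as the paper — augment the two $(n-1)$-families to square matrices, apply Proposition \ref{20250710_5} together with multiplicativity of the determinant, and exploit the orthogonality from Corollary \ref{20250710_6} — but with a different choice of augmentation that changes the shape of the degenerate case. The paper appends $\mf{v}$ to \emph{both} families, i.e.\ it computes $\det\bigl([\mf{u}_1|\cdots|\mf{u}_{n-1}|\mf{v}]^T[\mf{v}_1|\cdots|\mf{v}_{n-1}|\mf{v}]\bigr)=\langle\mf{u},\mf{v}\rangle\langle\mf{v},\mf{v}\rangle$; the last row then vanishes except for the corner entry $\langle\mf{v},\mf{v}\rangle$, and since this factor is strictly positive whenever $\mf{v}\neq\mf{0}$, the only separate case is $\mf{v}=\mf{0}$, which is disposed of by the same linear-dependence argument you use (via Proposition \ref{20250707_3}). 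Your crosswise padding ($\mf{v}$ onto the $\mf{u}$-side, $\mf{u}$ onto the $\mf{v}$-side) is more symmetric and produces a genuinely block-diagonal matrix, giving the clean identity $\langle\mf{u},\mf{v}\rangle^2=\langle\mf{u},\mf{v}\rangle\det G$; the price is that the cancellation fails on the larger set $\langle\mf{u},\mf{v}\rangle=0$ rather than only when $\mf{v}=\mf{0}$, which forces your extra sub-case where $\mf{u},\mf{v}\neq\mf{0}$ are orthogonal. Your handling of that sub-case is sound: $\mf{u}\neq\mf{0}$ gives linear independence of the $\mf{u}_i$ by Proposition \ref{20250707_3} (worth citing explicitly at that point), so $\mathrm{span}\{\mf{u}_1,\ldots,\mf{u}_{n-1}\}=\mf{u}^\perp$ by the dimension count, hence $\mf{v}=\sum_i c_i\mf{u}_i$ nontrivially, and $\langle\mf{v},\mf{v}_j\rangle=0$ turns this into a nontrivial relation among the rows of $G$. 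So both routes are valid; the paper's asymmetric padding buys a simpler degenerate case, while yours buys a tidier determinant identity at the cost of a slightly longer case analysis.
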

\begin{proof}
If $\mf{v}=\mf{0}$, then $\langle \mf{u}, \mf{v} \rangle  =0$. Proposition \ref{20250707_3}  says  that  $\{\mf{v}_1, \ldots, \mf{v}_{n-1}\}$ is a linearly dependent set. Therefore, there exists a nonzero vector $\mf{c}=(c_1, \ldots, c_{n-1})\in\mb{R}^{n-1}$ such that
\[c_1\mf{v}_1+\cdots+c_{n-1}\mf{v}_{n-1}=\mf{0}.\]
Then
\[\begin{bmatrix} \langle\mf{u}_1,\mf{v}_1\rangle & \cdots & \langle\mf{u}_1, \mf{v}_{n-1}\rangle\\
 \vdots & \ddots &\vdots\\ \langle\mf{u}_{n-1},\mf{v}_1\rangle & \cdots & \langle\mf{u}_{n-1}, \mf{v}_{n-1}\rangle \end{bmatrix}\begin{bmatrix}c_1\\\vdots\\c_{n-1}\end{bmatrix}=\begin{bmatrix}\langle \mf{u}_1, c_1\mf{v}_1+\cdots+c_{n-1}\mf{v}_{n-1}\rangle \\\vdots\\ \langle \mf{u}_{n-1}, c_1\mf{v}_1+\cdots+c_{n-1}\mf{v}_{n-1}\rangle \end{bmatrix}=\mf{0}.\]This implies that the matrix $\di \begin{bmatrix}\langle \mf{u}_i,\mf{v}_j\rangle\end{bmatrix}$ is singular. Hence,
 \[\langle \mf{u}, \mf{v} \rangle  =0=\det\begin{bmatrix}\langle \mf{u}_i,\mf{v}_j\rangle\end{bmatrix}\]holds.
 If $\mf{v}\neq \mf{0}$, then $\langle \mf{v},\mf{v}\rangle >0$. By Proposition \ref{20250710_5},
 \begin{align*}
 \langle \mf{u}, \mf{v}\rangle \langle\mf{v},\mf{v}\rangle &=\det \begin{bmatrix} \mf{u}_1 &\rvline& \cdots &\rvline&\mf{u}_{n-1}  &\rvline& \mf{v} \end{bmatrix}^T\begin{bmatrix} \mf{v}_1 &\rvline& \cdots &\rvline&\mf{v}_{n-1}  &\rvline& \mf{v} \end{bmatrix} \\
 &=\det\begin{bmatrix} \langle\mf{u}_1,\mf{v}_1\rangle & \cdots & \langle\mf{u}_1, \mf{v}_{n-1}\rangle &\langle \mf{u}_1,\mf{v}\rangle\\
 \vdots & \ddots &\vdots&\vdots\\ \langle\mf{u}_{n-1},\mf{v}_1\rangle & \cdots & \langle\mf{u}_{n-1}, \mf{v}_{n-1}\rangle& \langle\mf{u}_{n-1}, \mf{v}\rangle \\ \langle\mf{v},\mf{v}_1\rangle& \cdots & \langle\mf{v}, \mf{v}_{n-1}\rangle& \langle\mf{v}, \mf{v}\rangle \end{bmatrix}.
 \end{align*}
 By Corollary \ref{20250710_6}, $\mf{v}$ is orthogonal to $\mf{v}_1, \ldots,\mf{v}_{n-1}$. Therefore,
 \[\langle\mf{v},\mf{v}_1\rangle= \cdots = \langle\mf{v}, \mf{v}_{n-1}\rangle=0.\]
 Thus,
 \begin{align*}
  \langle \mf{u}, \mf{v}\rangle \langle\mf{v},\mf{v}\rangle &=\det\begin{bmatrix} \langle\mf{u}_1,\mf{v}_1\rangle & \cdots & \langle\mf{u}_1, \mf{v}_{n-1}\rangle &\langle \mf{u}_1,\mf{v}\rangle\\
 \vdots & \ddots &\vdots&\vdots\\ \langle\mf{u}_{n-1},\mf{v}_1\rangle & \cdots & \langle\mf{u}_{n-1}, \mf{v}_{n-1}\rangle& \langle\mf{u}_{n-1}, \mf{v}\rangle \\ 0& \cdots & 0& \langle\mf{v}, \mf{v}\rangle \end{bmatrix}
 \\&=\langle \mf{v},\mf{v}\rangle  \det\begin{bmatrix}\langle\mf{u}_1,\mf{v}_1\rangle & \cdots & \langle\mf{u}_1, \mf{v}_{n-1}\rangle\\
 \vdots & \ddots &\vdots\\ \langle\mf{u}_{n-1},\mf{v}_1\rangle & \cdots & \langle\mf{u}_{n-1}, \mf{v}_{n-1}\rangle\end{bmatrix}.
 \end{align*}Since $\langle \mf{v},\mf{v}\rangle >0$, we find that 
 \[ \langle \mf{u}, \mf{v}\rangle = \det\begin{bmatrix}\langle\mf{u}_1,\mf{v}_1\rangle & \cdots & \langle\mf{u}_1, \mf{v}_{n-1}\rangle\\
 \vdots & \ddots &\vdots\\ \langle\mf{u}_{n-1},\mf{v}_1\rangle & \cdots & \langle\mf{u}_{n-1}, \mf{v}_{n-1}\rangle\end{bmatrix}.\]
\end{proof}

Taking $\mf{u}_j=\mf{v}_j$ for $1\leq j\leq n-1$ in Proposition \ref{20250707_4}, we obtain the norm of the vector $\mathscr{P}(\mf{v}_1, \ldots, \mf{v}_{n-1})$. 
\begin{corollary}
Let $n\geq 2$, and let $\mf{v}_1, \ldots, \mf{v}_{n-1}$ be vectors in $\mb{R}^n$. The  cross product $\mathscr{P}(\mf{v}_1, \ldots, \mf{v}_{n-1})$ is a vector in $\mb{R}^n$  with norm
\[\left\Vert \mathscr{P}(\mf{v}_1, \ldots, \mf{v}_{n-1})\right\Vert=\sqrt{\det\begin{bmatrix}\langle\mf{v}_i,\mf{v}_j\rangle\end{bmatrix}}.\]
\end{corollary}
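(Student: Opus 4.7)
The proof is essentially a one-line specialization of Proposition \ref{20250707_4}, so the plan is to invoke that proposition with $\mathbf{u}_j = \mathbf{v}_j$ for $1 \leq j \leq n-1$. With this substitution, the two cross products coincide: $\mathbf{u} = \mathbf{v} = \mathscr{P}(\mathbf{v}_1, \ldots, \mathbf{v}_{n-1})$, and the Gram matrix on the right becomes $[\langle \mathbf{v}_i, \mathbf{v}_j \rangle]$.

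Applying Proposition \ref{20250707_4} then gives
\[
\Vert \mathscr{P}(\mathbf{v}_1, \ldots, \mathbf{v}_{n-1}) \Vert^2 = \langle \mathbf{v}, \mathbf{v} \rangle = \det [\langle \mathbf{v}_i, \mathbf{v}_j \rangle].
\]
Taking square roots yields the claim, provided we observe that the determinant on the right is nonnegative — which is automatic here because it equals $\Vert \mathbf{v} \Vert^2$. (It is, of course, the Gram determinant and hence nonnegative in general, vanishing precisely when $\{\mathbf{v}_1,\ldots,\mathbf{v}_{n-1}\}$ is linearly dependent, in agreement with Proposition \ref{20250707_3}.)

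There is no real obstacle: the work has already been done in Proposition \ref{20250707_4}. The only thing to note is the passage from the squared-norm identity to the norm identity, which is justified by the nonnegativity of the Gram determinant. I would write the proof in two or three lines, simply citing Proposition \ref{20250707_4}.
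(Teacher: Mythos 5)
Your proposal is correct and matches the paper exactly: the paper obtains this corollary by taking $\mf{u}_j=\mf{v}_j$ in Proposition \ref{20250707_4}, just as you do. Your extra remark on the nonnegativity of the Gram determinant when taking the square root is a harmless (and accurate) clarification.
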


From this, we obtain the following.
\begin{corollary}\label{20250708_1}
Let $n\geq 2$, and let $\{\mf{u}_1, \ldots, \mf{u}_{n-1}\}$ be an orthonormal set in $\mb{R}^n$. If
\[\mf{u}_n=\mathscr{P}(\mf{u}_1, \ldots, \mf{u}_{n-1}),\]
then $\{\mf{u}_1, \ldots, \mf{u}_{n-1},\mf{u}_n\}$ is a positively oriented orthonormal basis of $\mb{R}^n$.

\end{corollary}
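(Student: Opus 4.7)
The plan is to verify the three claims separately: orthogonality of $\mf{u}_n$ to the earlier vectors, unit length of $\mf{u}_n$, and positive orientation of the resulting basis. Each piece is a direct application of a result already proved in the Preliminaries.

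First I would note that $\{\mf{u}_1,\ldots,\mf{u}_{n-1}\}$ is in particular linearly independent, so by Proposition \ref{20250707_3} the vector $\mf{u}_n=\mathscr{P}(\mf{u}_1,\ldots,\mf{u}_{n-1})$ is nonzero, and by Corollary \ref{20250710_6} it is orthogonal to every $\mf{u}_i$, $1\le i\le n-1$. Thus $\{\mf{u}_1,\ldots,\mf{u}_n\}$ is an orthogonal set.

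Next I would compute $\Vert\mf{u}_n\Vert$ using the norm formula from the preceding corollary: $\Vert\mf{u}_n\Vert^2=\det[\langle \mf{u}_i,\mf{u}_j\rangle]_{1\le i,j\le n-1}$. Because $\{\mf{u}_1,\ldots,\mf{u}_{n-1}\}$ is orthonormal, the Gram matrix is the $(n-1)\times(n-1)$ identity, whose determinant is $1$, giving $\Vert\mf{u}_n\Vert=1$. Combined with the previous paragraph, $\{\mf{u}_1,\ldots,\mf{u}_n\}$ is an orthonormal set, hence a basis of $\mb{R}^n$.

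Finally, to check positive orientation, I would apply Proposition \ref{20250710_5} with $\mf{v}_j=\mf{u}_j$ for $1\le j\le n-1$ and $\mf{v}_n=\mf{u}_n$:
\[
\det\begin{bmatrix}\mf{u}_1 &\rvline & \cdots &\rvline &\mf{u}_n\end{bmatrix}=\langle\mathscr{P}(\mf{u}_1,\ldots,\mf{u}_{n-1}),\mf{u}_n\rangle=\langle \mf{u}_n,\mf{u}_n\rangle=1>0.
\]
This yields the desired positive orientation. There is no real obstacle here: once the norm identity and Proposition \ref{20250710_5} are in hand, the corollary is essentially a bookkeeping combination of them, and the only thing to be careful about is invoking Proposition \ref{20250707_3} to know $\mf{u}_n\neq\mf{0}$ before computing its norm.
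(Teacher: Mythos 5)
Your proof is correct and follows essentially the route the paper intends: the corollary is stated as an immediate consequence of the norm formula (Gram determinant equal to $1$ gives $\Vert\mf{u}_n\Vert=1$), Corollary \ref{20250710_6} for orthogonality, and Proposition \ref{20250710_5} for the sign of $\det\begin{bmatrix}\mf{u}_1 &\rvline&\cdots&\rvline&\mf{u}_n\end{bmatrix}$. The only superfluous step is invoking Proposition \ref{20250707_3} to get $\mf{u}_n\neq\mf{0}$, since the norm computation already yields $\Vert\mf{u}_n\Vert=1$.
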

 
Next, we discuss the Euclidean geometry of curves in $\mb{R}^n$.
  Some standard textbooks in this topic are \cite{Gerretsen, Klingenberg, doCarmo, Tapp, Alfred, Wolfgang, Spivak2, Spivak4}.  
  
  A parametrized curve in $\mb{R}^n$ is a smooth function $\bg:I\to\mb{R}^n$ defined on an open interval $I$ with $\bg'(t)\neq \mf{0}$ for all $t\in I$. Fixed a $t_0\in I$, then the arclength function $s:I\to\mb{R}$ defined by
\[s(t)=\int_{t_0}^t \Vert\bg'(\tau)\Vert d\tau \] is a strictly increasing smooth function. If $\widetilde{\bg}:J\to\mb{R}^n$ is the function defined as \[\widetilde{\bg}=\bg\circ s^{-1},\] then $\widetilde{\bg}:J\to\mb{R}^n$ is a reparametrization of $\bg:I\to\mb{R}^n$ by arc-length. 
  We usually use $s$ instead of $t$ as the parameter for a curve $\widetilde{\bg}:J\to\mb{R}^n$ that is parametrized by arclength.

\begin{definition}[The Canonical Matrix]
Let $\bg:I\to\mb{R}^n$ be a  parametrized curve.
The canonical matrix of $\bg$ at $t\in I$ is defined to be the $n\times n$ matrix  
   \[\mf{A}(t)=\begin{bmatrix}\bg'(t) &\rvline & \bg''(t) &\rvline & \cdots & \rvline & \bg^{(n)}(t)\end{bmatrix}.\]
   \end{definition}
If $\tbg:J\to\mb{R}^n$ is a reparametrization of $\bg:I\to\mb{R}^n$, there exists a strictly increasing diffeomorphism $\phi:J\to I$ such that 
\[\tbg(t)=\bg(\phi(t)).\]In particular, $\phi'(t)>0$ for all $t\in J$. 
We have the following.

\begin{proposition}\label{20250710_3}
Let $\bg:I\to\mb{R}^n$ be a parametrized curve. Assume that $\tbg:J\to\mb{R}^n$ is a reparametrization of $\bg:I\to\mb{R}^n$ such that
$\tbg(t)=\bg(\phi(t))$ for a strictly increasing diffeomorphism $\phi:J\to I$. Then for $1\leq j\leq n$, there exist smooth functions $U_{i,j}(t)$, $1\leq i\leq j$ such that
\[\tbg^{(j)}(t) = \sum_{i=1}^{j}U_{i,j}(t)\bg^{(i)}(\phi(t)),\]
with
$U_{j,j}(t)=\phi'(t)^j$. In other words, if  
  \[\mf{A}(t)=\begin{bmatrix}\bg'(t) &\rvline & \bg''(t) &\rvline & \cdots & \rvline & \bg^{(n)}(t)\end{bmatrix}\quad\text{and}\quad \widetilde{\mf{A}}(t)=\begin{bmatrix}\tbg'(t) &\rvline & \tbg''(t) &\rvline & \cdots & \rvline & \tbg^{(n)}(t)\end{bmatrix}\]are the canonical matrices of $\bg(t)$ and $\tbg(t)$, there is an upper triangular matrix $\mf{U}(t)=[U_{i,j}(t)]$ with
  diagonal entries 
  $U_{j,j}(t)=\phi'(t)^j$, $1\leq j\leq n$, such that
  \[\widetilde{\mf{A}}(t)=\mf{A}(\phi(t))\mf{U}(t).\] 
\end{proposition}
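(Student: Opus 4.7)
The plan is to prove the identity $\tbg^{(j)}(t) = \sum_{i=1}^{j} U_{i,j}(t)\bg^{(i)}(\phi(t))$ with $U_{j,j}(t) = \phi'(t)^j$ by induction on $j$, and then observe that the matrix formulation is just a repackaging of these relations.

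For the base case $j=1$, the chain rule gives $\tbg'(t) = \phi'(t)\,\bg'(\phi(t))$, so we simply set $U_{1,1}(t) = \phi'(t) = \phi'(t)^1$. For the inductive step, assume the statement holds at level $j$. Differentiating both sides with respect to $t$, and using the chain rule on each $\bg^{(i)}(\phi(t))$, gives
\begin{equation*}
\tbg^{(j+1)}(t) = \sum_{i=1}^{j}\Bigl[U_{i,j}'(t)\,\bg^{(i)}(\phi(t)) + U_{i,j}(t)\phi'(t)\,\bg^{(i+1)}(\phi(t))\Bigr].
\end{equation*}
Re-indexing the second sum and collecting terms by $\bg^{(i)}(\phi(t))$ produces an expression of the form $\sum_{i=1}^{j+1} U_{i,j+1}(t)\bg^{(i)}(\phi(t))$, with the explicit recursion $U_{1,j+1}(t) = U_{1,j}'(t)$, $U_{i,j+1}(t) = U_{i,j}'(t) + \phi'(t)U_{i-1,j}(t)$ for $2 \leq i \leq j$, and crucially $U_{j+1,j+1}(t) = \phi'(t)\,U_{j,j}(t) = \phi'(t)\cdot\phi'(t)^j = \phi'(t)^{j+1}$. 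Smoothness of each $U_{i,j}$ follows from smoothness of $\phi$ by an easy secondary induction.

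Once the componentwise formula is established, the matrix identity $\widetilde{\mf{A}}(t) = \mf{A}(\phi(t))\mf{U}(t)$ is immediate: the $j$-th column of $\widetilde{\mf{A}}(t)$ is $\tbg^{(j)}(t)$, which by the induction is a linear combination of the first $j$ columns of $\mf{A}(\phi(t))$ with coefficients $U_{i,j}(t)$. The vanishing of $U_{i,j}(t)$ for $i > j$ (we simply define them to be zero) makes $\mf{U}(t)$ upper triangular, and the diagonal entries are the $\phi'(t)^j$ computed above.

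There is no real obstacle here; the only thing to watch is the index bookkeeping when re-indexing the shifted sum in the inductive step, and the verification that the induction preserves the leading diagonal coefficient $\phi'(t)^j$. Everything else is a direct application of the chain rule and product rule, together with the observation that the product and sum of smooth functions are smooth.
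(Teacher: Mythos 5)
Your proof is correct: the induction on $j$ via the chain and product rules, with the recursion $U_{j+1,j+1}=\phi' U_{j,j}$ giving the diagonal entries $\phi'(t)^j$, is exactly the standard argument, and the matrix identity is indeed just a repackaging with $U_{i,j}=0$ for $i>j$. The paper itself states this proposition without proof, so there is nothing to contrast; your argument fills in precisely the computation the paper takes for granted.
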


The regularity order of a curve is defined in the following way.
 
\begin{definition}[The Regularity Order of a Curve]
Given a curve $\bg:I\to\mb{R}^n$,  if   $k$ is a positive integer such that for all $t\in I$, $\{\bg'(t), \ldots, \bg^{(k)}(t)\}$ is a linearly independent set, we say that the curve $\bg:I\to\mb{R}^n$ is regular of order $k$.   

  \end{definition}
By definition, for any $t\in I$, $\bg'(t)$ is a nonzero vector. Thus, $\{\bg'(t)\}$ is a linearly independent set. Hence, a curve $\bg:I\to\mb{R}^n$ must be regular of order 1. If a curve is regular of order $k$, then $1\leq k\leq n$. 

Now we define the Frenet frame for a curve.
  \begin{definition}[The Frenet Frame]\label{20250713_1}
  Let $n\geq 2$, and let
   $\bg:I\to\mb{R}^n$ be a curve in $\mb{R}^n$ that has order $n-1$.
    For any $t\in I$,  
  the Frenet frame $\{\mf{T}(t), \mf{N}_1(t), \ldots,\mf{N}_{n-1}(t)\}$ is an orthonormal set, where the  vectors $\mf{T}(t), \mf{N}_1(t), \ldots,\mf{N}_{n-2}(t)$ are obtained by applying the Gram-Schmidt process to the linearly independent set $\{\bg'(t), \ldots, \bg^{(n-1)}(t)\}$, and the vector $\mf{N}_{n-1}(t)$ is defined by the cross product
   \[\mf{N}_{n-1}(t)=\mathscr{P}\left(\mf{T}(t), \mf{N}_1(t), \ldots,\mf{N}_{n-2}(t)\right).\]The frame matrix $\mf{F}(t)$  is defined as
   the $n\times n$ matrix    \[  \mf{F}(t)=\begin{bmatrix}\mf{T}(t) &\rvline & \mf{N}_1(t) &\rvline & \cdots & \rvline & \mf{N}_{n-1}(t)\end{bmatrix}.\]
   \end{definition}
   By Corollary \ref{20250708_1}, for any $t\in I$, the Frenet frame $\{\mf{T}(t), \mf{N}_1(t), \ldots,\mf{N}_{n-1}(t)\}$ is a positively oriented orthonormal basis of $\mb{R}^n$. Equivalently, the frame matrix $\mf{F}(t)$ is an orthogonal matrix with determinant 1. The algorithm of the Gram-Schmidt process and the definition of the generalized cross product show  that each of the vectors $\mf{T}(t), \mf{N}_1(t), \ldots,\mf{N}_{n-1}(t)$ is a smooth function of $t$. 
   
   Our definition differs from some literatures which only consider curves of order $n$ in $\mb{R}^n$. They define the Frenet frames as the orthonormal set obtained by applying the Gram-Schmidt process to the set $\{\bg'(t), \bg''(t), \ldots, \bg^{(n)}(t)\}$. The definition we use here produces the same vectors $\mf{T}(t)$, $\ldots$, $\mf{N}_{n-2}(t)$, and produces the vector $\mf{N}_{n-1}(t)$ that might  differ by a sign.
   
One can show that the Frenet frame  is independent of parametrizations.

Given a curve $\bg:I\to\mb{R}^n$  of order $n-1$, let $\mf{A}(t)$ and $\mf{F}(t)$ be respectively the canonical matrix and the frame matrix of $\bg$. Then there exists an $n\times n$  matrix $\mf{R}(t)=[R_{i,j}(t)]$ such that
   \[\mf{A}(t)=\mf{F}(t)\mf{R}(t).\]
 Since the first $n-1$ column vectors of $\mf{F}(t)$ are obtained by applying the Gram-Schmidt process to the first $n-1$ column vectors of $\mf{A}(t)$, $\mf{R}(t)$ is an upper triangular matrix. Moreover, for $1\leq j\leq n-1$, the diagonal entry $ R_{j,j}(t)$ is positive. 
 
Since $\mf{F}(t)$ is an orthogonal matrix and $\mf{R}(t)$ is an upper triangular matrix, $\mf{A}(t)=\mf{F}(t)\mf{R}(t)$ gives a QR-decomposition of the matrix $\mf{A}(t)$. This explains our choice  of the notation for $\mf{R}(t)$.

Now we   define the    generalized curvatures $\kappa_i$, $1\leq i\leq n-1$.
   
\begin{definition}[Generalized Curvatures]\label{20250713_2}
Let $\bg:I\to\mb{R}^n$ be an arclength parametrized curve  of order $n-1$.  For $s\in I$, let $\{\mf{T}(s), \mf{N}_1(s), \ldots, \mf{N}_{n-1}(s)\}$ be the Frenet frame of the curve at $\bg(s)$. The first curvature $\kappa_1(s)$ is defined as
\begin{equation}\label{20250709_3}\kappa_1(s)=\langle\mf{T}'(s),\mf{N}_1(s)\rangle.\end{equation}For $2\leq i\leq n-1$, the $i$-th curvature $\kappa_i(s)$ is defined as
\begin{equation}\label{20250709_4}\kappa_i(s)=\langle \mf{N}_{i-1}'(s),\mf{N}_i(s)\rangle.\end{equation}
\end{definition}
If a curve is not parametrized by arclength, we define the generalized curvatures by its arclength reparametrization.

The Frenet-Serret formulas  is a set of equations that express the derivatives of $\mf{T}(s)$, $\mf{N}_1(s)$, $\ldots$, $\mf{N}_{n-1}(s)$ with respect to $s$ in terms of $\mf{T}(s)$, $\mf{N}_1(s)$, $\ldots$, $\mf{N}_{n-1}(s)$.

\begin{theorem}[Frenet-Serret Formulas] Let $\bg:I\to\mb{R}^n$ be a curve of order $n-1$ that is parametrized by  arclength. For $s\in I$, let $\{\mf{T}(s), \mf{N}_1(s), \ldots, \mf{N}_{n-1}(s)\}$ be the Frenet frame at $\bg(s)$, and let $\kappa_i(s)$, $1\leq i\leq n-1$ be the generalized curvatures. Then
\begin{align*}
\frac{d\mf{T}}{ds}&=\kappa_1\mf{N}_1,\\
\frac{d\mf{N}_1}{ds}&=-\kappa_1\mf{T}+\kappa_2 \mf{N}_2,\\
 &\hspace{1cm}\vdots\\
 \frac{d\mf{N}_j}{ds}&=-\kappa_j\mf{N}_{j-1}+\kappa_{j+1}\mf{N}_{j+1},\hspace{1cm} 2\leq j\leq n-2,\\
 &\hspace{1cm}\vdots\\
 \frac{d\mf{N}_{n-1}}{ds}&=-\kappa_{n-1}\mf{N}_{n-2}.
\end{align*}
In terms of the frame matrix $\di \mf{F}(s)=\begin{bmatrix}\mf{T}(s) &\rvline & \mf{N}_1(s) &\rvline &\cdots &\rvline &\mf{N}_{n-1}(s)\end{bmatrix}$, these Frenet-Serret formulas can be written as
\[\frac{d\mf{F}(s)}{ds}=\mf{F}(s)\mf{C}(s),\]
where $\mf{C}(s)$ is the anti-symmetric matrix
\[\mf{C}(s)=\begin{bmatrix} 0 &-\kappa_1(s) & 0 &\cdots & 0 & 0\\
\kappa_1(s)& 0 & -\kappa_2(s) & \cdots & 0 & 0\\
0 &\kappa_2(s) & 0 &\cdots & 0 & 0\\
\vdots & \vdots & \vdots & \ddots & \vdots & \vdots\\
0 & 0 & 0 &\cdots & 0&-\kappa_{n-1}(s)  \\
0 & 0 & 0 &\cdots & \kappa_{n-1}(s) & 0\end{bmatrix}.\]
\end{theorem}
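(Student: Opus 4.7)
The plan is to work with the matrix identity $\mf{F}'(s) = \mf{F}(s)\mf{C}(s)$, where $\mf{C}(s) := \mf{F}(s)^T \mf{F}'(s)$, and to establish two structural facts about $\mf{C}(s)$: it is (i) anti-symmetric and (ii) tridiagonal. Combined with Definition \ref{20250713_2}, these immediately identify the entries of $\mf{C}(s)$ and yield the Frenet--Serret formulas. Anti-symmetry is immediate: since the Frenet frame is orthonormal, $\mf{F}(s)^T\mf{F}(s) = \mf{I}$, and differentiating yields $\mf{C}(s)^T + \mf{C}(s) = \mf{0}$.

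For the tridiagonal structure, the key ingredient is the span inclusion
\[\mf{N}_k(s) \in \mathrm{span}\{\bg'(s), \ldots, \bg^{(k+1)}(s)\}, \qquad 0 \le k \le n-2,\]
where $\mf{N}_0 := \mf{T}$; this is immediate from the Gram--Schmidt construction of $\mf{T}, \mf{N}_1, \ldots, \mf{N}_{n-2}$. Differentiating gives $\mf{N}_k'(s) \in \mathrm{span}\{\bg'(s), \ldots, \bg^{(k+2)}(s)\}$, and whenever $k+2 \le n-1$ this coincides with $\mathrm{span}\{\mf{T}, \mf{N}_1, \ldots, \mf{N}_{k+1}\}$. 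Hence $\langle \mf{N}_j(s), \mf{N}_k'(s)\rangle = 0$ for $k \le n-3$ and $j > k+1$. The remaining rows $k \in \{n-2, n-1\}$ (where the direct argument fails, in particular because $\mf{N}_{n-1}$ is defined via the generalized cross product rather than by Gram--Schmidt) are handled by anti-symmetry: for $j < k-1$, one has $\langle \mf{N}_j(s), \mf{N}_k'(s)\rangle = -\langle \mf{N}_k(s), \mf{N}_j'(s)\rangle$, which vanishes by what has just been shown.

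This establishes that $\mf{C}(s)$ is tridiagonal. Its sub-diagonal entries are $\langle \mf{N}_1(s), \mf{T}'(s)\rangle$ and $\langle \mf{N}_i(s), \mf{N}_{i-1}'(s)\rangle$ for $2 \le i \le n-1$, which are exactly $\kappa_1(s), \ldots, \kappa_{n-1}(s)$ by Definition \ref{20250713_2}; anti-symmetry then forces the super-diagonal to be their negatives and the diagonal to be zero. Reading $\mf{F}'(s) = \mf{F}(s)\mf{C}(s)$ column by column produces the stated equations for $d\mf{T}/ds$ and $d\mf{N}_j/ds$. The main obstacle is the treatment of the last two rows and columns, where the direct span inclusion is unavailable and the vanishing of off-tridiagonal entries must instead be propagated via anti-symmetry from the generic case.
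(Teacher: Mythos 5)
Your proposal is correct, and in fact the paper itself offers no proof of this theorem: it is stated in the Preliminaries as a known result (with textbook references), so there is nothing internal to compare against. Your argument is the standard one, and you handle the one genuinely delicate point of this paper's setting correctly: since the curve has order $n-1$, the last frame vector $\mf{N}_{n-1}$ comes from the generalized cross product rather than from Gram--Schmidt, so the span inclusion $\mf{N}_k(s)\in\mathrm{span}\{\bg'(s),\ldots,\bg^{(k+1)}(s)\}$ and its differentiated form are only available for $k\le n-3$; propagating the vanishing of the remaining off-tridiagonal entries through anti-symmetry of $\mf{C}=\mf{F}^T\mf{F}'$ is exactly the right fix, and it works because any pair of indices differing by at least $2$ contains one index to which the direct argument applies. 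Two small points you should make explicit in a full write-up: first, differentiating the span relation requires that the coefficients expressing $\mf{N}_k(s)$ in terms of $\bg'(s),\ldots,\bg^{(k+1)}(s)$ are smooth in $s$; this follows from the smoothness of the Gram--Schmidt output (equivalently, from $\mf{A}=\mf{F}\mf{R}$ with $\mf{R}$ smooth, upper triangular, invertible on the first $n-1$ columns, so $\mf{N}_k$ is a smooth combination of the $\bg^{(i)}$ via $\mf{R}^{-1}$). Second, $\mf{N}_{n-1}$ is smooth because it is a polynomial (cross-product) expression in the smooth vectors $\mf{T},\mf{N}_1,\ldots,\mf{N}_{n-2}$, which is needed before one may differentiate $\mf{F}$ at all. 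With these remarks added, reading off the subdiagonal entries via Definition \ref{20250713_2} and closing with anti-symmetry gives the stated formulas, exactly as you describe.
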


\bigskip
\section{Generalized Curvatures}
For a curve $\bg:I\to\mb{R}^n$ of order $n-1$, we have defined its generalized curvatures $\kappa_1,\ldots, \kappa_{n-1}$. When $n=3$, $\kappa_1$ is the curvature $\kappa$, $\kappa_2$ is the torsion $\tau$. For general $n$, it has been proved that the curvatures $\kappa_1, \ldots, \kappa_{n-2}$ are positive-valued (see for example,   \cite{Gluck, Gluck_2}). Here we prove this fact in a slightly different way, without the need to define the excess vectors as in \cite{Gluck}. We first prove the following theorem.

\begin{theorem}\label{20250709_9}
Given that $\bg:I\to\mb{R}^n$ is an aclength parametrized curve of order $n-1$, let  $\kappa_1, \ldots, \kappa_{n-1}$ be the generalized curvatures of $\bg:I\to\mb{R}^n$. Denote by $\mf{A}(s)$ and $\mf{F}(s)$ the canonical  matrix and the frame matrix of $\bg$. Let $\mf{R}(s)=[R_{i,j}(s)]$ be the $n\times n$ upper triangular matrix so that
   $\mf{A}(s)=\mf{F}(s)\mf{R}(s)$. Then $R_{1,1}(s)=1$, and for $2\leq j\leq n$, the $j$-th diagonal entry $R_{j,j}(s)$ of $\mf{R}(s)$ is given by
   \[R_{j,j}(s)=\prod_{i=1}^{j-1}\kappa_i(s)=\kappa_1(s)\kappa_2(s)\cdots\kappa_{j-1}(s).\]
   
\end{theorem}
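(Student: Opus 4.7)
The plan is to prove by induction on $j$ that, for each $1 \leq j \leq n$,
\[
\bg^{(j)}(s) \;=\; \sum_{k=0}^{j-1} c_{j,k}(s)\, \mf{N}_k(s),
\]
adopting the convention $\mf{N}_0 := \mf{T}$, with the leading coefficient satisfying
\[
c_{j,j-1}(s) \;=\; \prod_{i=1}^{j-1}\kappa_i(s)
\]
(the empty product being $1$). Once this is established, I will recover the statement of the theorem by observing that since $\mf{F}(s)$ is orthogonal, the QR relation $\mf{A}(s)=\mf{F}(s)\mf{R}(s)$ gives $\mf{R}(s)=\mf{F}(s)^T\mf{A}(s)$, so the diagonal entry $R_{j,j}(s)=\langle \mf{N}_{j-1}(s),\bg^{(j)}(s)\rangle = c_{j,j-1}(s)$.

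For the base case $j=1$, the arclength hypothesis gives $\bg'(s)=\mf{T}(s)$ directly, so $c_{1,0}(s)=1$, which reads $R_{1,1}(s)=1$. For the inductive step, assuming the expansion of $\bg^{(j)}$ holds, I differentiate to obtain
\[
\bg^{(j+1)}(s) \;=\; \sum_{k=0}^{j-1}\Bigl[c_{j,k}'(s)\,\mf{N}_k(s)+c_{j,k}(s)\,\mf{N}_k'(s)\Bigr],
\]
and substitute the Frenet-Serret formulas $\mf{N}_k'=-\kappa_k\mf{N}_{k-1}+\kappa_{k+1}\mf{N}_{k+1}$ (with the natural boundary conventions already encoded in the statement of the Frenet-Serret theorem). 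The key observation is that the vector $\mf{N}_j$ appears on the right-hand side only through the single contribution $c_{j,j-1}(s)\,\mf{N}_{j-1}'(s)$, whose $\mf{N}_j$-component equals $c_{j,j-1}(s)\kappa_j(s)$. Hence the new leading coefficient is
\[
c_{j+1,j}(s) \;=\; c_{j,j-1}(s)\,\kappa_j(s) \;=\; \prod_{i=1}^{j}\kappa_i(s),
\]
while all other terms involve only $\mf{N}_0,\ldots,\mf{N}_{j-1}$, closing the induction.

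I do not expect any serious obstacle. The only point that warrants a sentence of care is the top of the induction, when $j=n$: the Frenet-Serret formula for $\mf{N}_{n-1}'$ contains no $\mf{N}_n$ term, but this never interferes since the inductive expansion for $\bg^{(n-1)}$ ranges only over $\mf{N}_0,\ldots,\mf{N}_{n-2}$, and when we differentiate, it is the formula for $\mf{N}_{n-2}'$ (which does contain $\kappa_{n-1}\mf{N}_{n-1}$) that produces the top-index term. Thus the inductive computation goes through uniformly for all $1\leq j\leq n$, yielding the claimed product formula for every diagonal entry $R_{j,j}(s)$.
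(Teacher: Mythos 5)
Your proof is correct and follows essentially the same route as the paper: expand $\bg^{(j)}$ in the Frenet frame, identify $R_{j,j}(s)=\langle \mf{N}_{j-1}(s),\bg^{(j)}(s)\rangle$, and induct by differentiating, with the $\mf{N}_j$-component of $\bg^{(j+1)}$ coming solely from the $\mf{N}_{j-1}'$ term, giving $R_{j+1,j+1}=R_{j,j}\kappa_j$. The only cosmetic difference is that you invoke the full Frenet--Serret formulas where the paper uses just the definition $\kappa_j=\langle\mf{N}_{j-1}',\mf{N}_j\rangle$ together with the orthogonality relations $\langle\mf{T}',\mf{N}_j\rangle=\langle\mf{N}_{i-1}',\mf{N}_j\rangle=0$ for $i\leq j-1$, which is the same content.
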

\begin{proof}
By definition, for $1\leq j\leq n$,  we have
\begin{equation}\label{20250709_11}\bg^{(j)}(s)=R_{1,j}(s)\mf{T}(s)+\sum_{i=2}^{j}R_{i,j}(s)\mf{N}_{i-1}(s).\end{equation}In particular,
\[\bg'(s)=R_{1,1}(s)\mf{T}(s).\]
Since $\mf{T}(s)=\bg'(s)$, we find that \[R_{1,1}(s)=1.\] 
For $2\leq j\leq n$, \eqref{20250709_11} gives
\[R_{j,j}(s)=\langle\bg^{(j)}(s),\mf{N}_{j-1}(s)\rangle.\]
Using the fact that $\bg'(s)=\mf{T}(s)$, we have
\[R_{2,2}(s)=\langle \bg''(s),\mf{N}_1(s)\rangle=\langle \mf{T}'(s), \mf{N}_1(s)\rangle=\kappa_1(s).\]
 Now given $2\leq j\leq n-1$,  assume that we have shown that
\begin{equation}\label{20250711_3}R_{j,j}(s)=\kappa_1(s)\ldots\kappa_{j-1}(s).\end{equation}
Differentiating \eqref{20250709_11} with respect to $s$, we have
\begin{align*}
\bg^{(j+1)}(s)=R_{1,j}'(s)\mf{T}(s)+\sum_{i=2}^{j}R_{i,j}'(s)\mf{N}_{i-1}(s)+R_{1,j}(s)\mf{T}'(s)+\sum_{i=2}^{j}R_{i,j}(s)\mf{N}_{i-1}'(s).
\end{align*}
It follows that 
\begin{align*}
R_{j+1,j+1}(s)&=\langle \bg^{(j+1)}(s),\mf{N}_j(s)\rangle\\
&=R_{1,j}(s)\langle \mf{T}'(s),\mf{N}_j(s)\rangle+\sum_{i=2}^{j}R_{i,j}(s)\langle\mf{N}_{i-1}'(s),\mf{N}_j(s)\rangle.\end{align*}
Since $j\geq 2$,  $\langle \mf{T}'(s),\mf{N}_j(s)\rangle=0$. On the other hand,  
$ \langle\mf{N}_{i-1}'(s),\mf{N}_j(s)\rangle=0$ for $2\leq i\leq j-1$. Therefore,
\[R_{j+1,j+1}(s)=R_{j,j}(s)\langle\mf{N}_{j-1}'(s),\mf{N}_j(s)\rangle=R_{j,j}(s)\kappa_{j}(s).\]
By the inductive  hypothesis \eqref{20250711_3}, we conclude that \[R_{j+1,j+1}(s)=\kappa_1(s)\ldots\kappa_{j-1}(s)\kappa_j(s).\]
This completes the proof by induction.
\end{proof}

An immediate consequence of Theorem \ref{20250709_9} is the following result proved in \cite{Gluck}.
\begin{corollary}\label{20250709_14}
Given that $\bg:I\to\mb{R}^n$ is an aclength parametrized curve of order $n-1$, let  $\kappa_1, \ldots, \kappa_{n-1}$ be the generalized curvatures of $\bg:I\to\mb{R}^n$. Denote by $\mf{A}(s)$ and $\mf{F}(s)$ the canonical  matrix and the frame matrix of $\bg$. Let $\mf{R}(s)=[R_{i,j}(s)]$ be the $n\times n$ upper triangular matrix so that
   $\mf{A}(s)=\mf{F}(s)\mf{R}(s)$. Then for $1\leq i\leq n-1$,
   \begin{equation}\label{20251010_1}\kappa_i(s)=\frac{R_{i+1,i+1}(s)}{R_{i,i}(s)}.\end{equation}
\end{corollary}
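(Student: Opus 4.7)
The plan is to read the corollary off directly from Theorem \ref{20250709_9}, which has just been established: that theorem gives closed-form expressions for every diagonal entry of $\mf{R}(s)$ in terms of the generalized curvatures, and the statement here is nothing more than a telescoping rearrangement of those expressions.

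Before writing the quotient, I will confirm that the division is legitimate, i.e., $R_{i,i}(s)\neq 0$ for $1\leq i\leq n-1$. This was noted in the QR-decomposition discussion preceding Theorem \ref{20250709_9}: the first $n-1$ diagonal entries of $\mf{R}(s)$ are produced by applying Gram--Schmidt to the linearly independent vectors $\bg'(s),\ldots,\bg^{(n-1)}(s)$, and are therefore strictly positive.

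I then split into two cases. For $i=1$, Theorem \ref{20250709_9} gives $R_{1,1}(s)=1$ and $R_{2,2}(s)=\kappa_1(s)$, so the ratio is $\kappa_1(s)$ directly. For $2\leq i\leq n-1$, I substitute the product formulas
\[R_{i,i}(s)=\prod_{k=1}^{i-1}\kappa_k(s),\qquad R_{i+1,i+1}(s)=\prod_{k=1}^{i}\kappa_k(s),\]
and the quotient collapses to $\kappa_i(s)$. There is no substantive obstacle: all the work was carried out in proving Theorem \ref{20250709_9}, and Corollary \ref{20250709_14} is literally a one-line consequence of that result.
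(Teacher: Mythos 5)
Your proposal is correct and matches the paper, which states this corollary as an immediate consequence of Theorem \ref{20250709_9} without further argument: one simply divides the product formulas $R_{i+1,i+1}(s)=\kappa_1(s)\cdots\kappa_i(s)$ and $R_{i,i}(s)=\kappa_1(s)\cdots\kappa_{i-1}(s)$, the division being legitimate because $R_{i,i}(s)>0$ for $1\leq i\leq n-1$ as noted in the QR-decomposition discussion. Your explicit check of that nonvanishing and the $i=1$ case are exactly the (trivial) details the paper leaves implicit.
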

Since $R_{j,j}(s)>0$ for $1\leq j\leq n-1$, we obtain the following immediately.
 
\begin{theorem}\label{20250709_15}
Let $n\geq 3$, and let $\bg:I\to\mb{R}^n$ be a parametrized curve of order $n-1$. Then the generalized curvatures $\kappa_1, \ldots, \kappa_{n-2}$ are positive-valued.
\end{theorem}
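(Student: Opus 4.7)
The plan is to deduce this essentially directly from Corollary \ref{20250709_14}. Since the generalized curvatures of an arbitrarily parametrized curve are defined through its arclength reparametrization, it suffices to establish positivity of $\kappa_1, \ldots, \kappa_{n-2}$ under the assumption that $\bg:I\to\mb{R}^n$ is parametrized by arclength.

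First I would reduce to the arclength case, then invoke formula \eqref{20251010_1}, which expresses
\[\kappa_i(s)=\frac{R_{i+1,i+1}(s)}{R_{i,i}(s)}\]
as a ratio of two diagonal entries of $\mf{R}(s)$. The key observation is that $\mf{R}(s)$ arises as the $R$-factor of the QR-decomposition $\mf{A}(s)=\mf{F}(s)\mf{R}(s)$, and it was already recorded (immediately after Definition \ref{20250713_1} and in the Gram-Schmidt discussion in the Preliminaries) that $R_{j,j}(s)>0$ for $1\leq j\leq n-1$, since these diagonal entries come from the Gram-Schmidt normalization of the linearly independent vectors $\bg'(s),\ldots,\bg^{(n-1)}(s)$.

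Therefore, for $1\leq i\leq n-2$, both $R_{i,i}(s)$ and $R_{i+1,i+1}(s)$ are strictly positive, whence $\kappa_i(s)>0$. I would then conclude by pointing out that the sign of $\kappa_{n-1}(s)$ is \emph{not} constrained, because the diagonal entry $R_{n,n}(s)$ is not subjected to the positivity of the Gram-Schmidt process; instead, $\mf{N}_{n-1}(s)$ is defined by the generalized cross product, so $R_{n,n}(s)$ can have either sign. This is consistent with the remark in the introduction that $\kappa_{n-1}(t)$ has the same sign as $\det\mf{A}(t)$.

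There is no real obstacle here: the entire content of the theorem is already packaged into Corollary \ref{20250709_14} together with the standing positivity of Gram-Schmidt diagonal coefficients. The only subtlety worth flagging in a clean write-up is the need to explicitly remark that the result fails for $\kappa_{n-1}$, explaining why the upper index stops at $n-2$.
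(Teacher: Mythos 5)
Your proposal is correct and coincides with the paper's own argument: the paper likewise deduces the theorem immediately from Corollary \ref{20250709_14} together with the positivity of the Gram--Schmidt diagonal entries $R_{j,j}(s)$ for $1\leq j\leq n-1$ recorded after the definition of the Frenet frame. The reduction to the arclength case and the remark about $\kappa_{n-1}$ being excluded are both consistent with the paper.
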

 
\begin{remark}
It can be shown that $R_i(s)$ is the norm of the excess vector $E_i(s)$ defined in \cite{Gluck}. Hence, the formula \eqref{20251010_1} is essentially Theorem {3.1} in \cite{Gluck}.
\end{remark}

For the sign of the generalized curvature $\kappa_{n-1}$, we have the following.
\begin{theorem}\label{20250711_9}
Given that $\bg:I\to\mb{R}^n$ is a parametrized curve of order $n-1$, let
 $\mf{A}(t)$ be the canonical matrix of $\bg$, and
let $\kappa_{n-1}(t)$ be the $(n-1)$-th generalized curvature of  the curve at $\bg(t)$.  Then $\kappa_{n-1}(t)$ has the same sign as 
$\det \mf{A}(t)$.  In particular, $\kappa_{n-1}(t)=0$ if and only if $\{\bg'(t), \ldots, \bg^{(n)}(t)\}$ is a linearly dependent set.
\end{theorem}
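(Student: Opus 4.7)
The plan is to reduce everything to the arclength case via Proposition \ref{20250710_3}, and then read off the sign of $\det\mf{A}$ from the diagonal of the QR factor $\mf{R}$. First I would handle the arclength case: assume $\bg$ is parametrized by arclength, so that $\mf{A}(s)=\mf{F}(s)\mf{R}(s)$ with $\mf{F}(s)$ orthogonal of determinant $1$ and $\mf{R}(s)$ upper triangular. Then $\det\mf{A}(s)=\det\mf{R}(s)=\prod_{j=1}^{n}R_{j,j}(s)$. By Theorem \ref{20250709_9}, $R_{1,1}(s)=1$ and $R_{j,j}(s)=\kappa_1(s)\cdots\kappa_{j-1}(s)$ for $2\leq j\leq n$, so a short bookkeeping computation gives
\[
\det\mf{A}(s)=\prod_{i=1}^{n-1}\kappa_i(s)^{\,n-i}.
\]
Since Theorem \ref{20250709_15} says $\kappa_1,\ldots,\kappa_{n-2}$ are strictly positive, the only factor that can change sign is $\kappa_{n-1}(s)$, which appears with exponent $1$. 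Hence $\det\mf{A}(s)$ has the same sign as $\kappa_{n-1}(s)$ in the arclength case.

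Next I would transfer this to a general parametrization. Write $\tbg(s)=\bg(\phi(s))$ for the arclength reparametrization, where $\phi$ is strictly increasing so that $\phi'(s)>0$. By Proposition \ref{20250710_3}, $\widetilde{\mf{A}}(s)=\mf{A}(\phi(s))\mf{U}(s)$ with $\mf{U}(s)$ upper triangular and diagonal entries $\phi'(s)^{j}$, all positive. Taking determinants, $\det\widetilde{\mf{A}}(s)=\det\mf{A}(\phi(s))\prod_{j=1}^{n}\phi'(s)^{j}$, and the product is strictly positive. So $\det\widetilde{\mf{A}}(s)$ and $\det\mf{A}(t)$ (with $t=\phi(s)$) have the same sign. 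Combined with the previous paragraph and the convention that $\kappa_{n-1}(t)$ is defined through the arclength reparametrization, this shows $\det\mf{A}(t)$ has the same sign as $\kappa_{n-1}(t)$.

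Finally, for the "in particular" clause, $\kappa_{n-1}(t)=0$ iff $\det\mf{A}(t)=0$ (by the sign identification just proved, since neither can be nonzero while the other vanishes), iff the columns $\bg'(t),\ldots,\bg^{(n)}(t)$ of the square matrix $\mf{A}(t)$ are linearly dependent.

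I do not expect a significant obstacle here: the only subtlety is making sure the exponents in $\det\mf{A}(s)=\prod \kappa_i^{n-i}$ are correctly counted (each $\kappa_i$ contributes to $R_{j,j}$ for all $j\geq i+1$, hence appears $n-i$ times), and making sure that the reparametrization step preserves the sign of the determinant, which it does because $\mf{U}(s)$ has a strictly positive diagonal.
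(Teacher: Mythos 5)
Your proof is correct and follows essentially the same route as the paper: reduce to the arclength case, use $\mf{A}=\mf{F}\mf{R}$ with $\det\mf{F}=1$, and read the sign of $\det\mf{A}$ off the diagonal of $\mf{R}$ via Theorems \ref{20250709_9} and \ref{20250709_15} (your explicit formula $\det\mf{A}=\prod_i\kappa_i^{\,n-i}$ is just a repackaging of the paper's observation that $R_{1,1},\ldots,R_{n-1,n-1}>0$ and $R_{n,n}=\kappa_1\cdots\kappa_{n-1}$). The only difference is that you spell out, via Proposition \ref{20250710_3}, the reduction to arclength that the paper leaves implicit, which is a welcome addition rather than a deviation.
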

\begin{proof}
It is sufficient to prove this theorem under the   assumption that $\bg:I\to\mb{R}^n$ is an arclength parametrization. Let $\mf{F}(s)$ be the frame matrix of $\bg$, and
 let $\mf{R}(s)$ be the $n\times n$ upper triangular matrix such that
 \[\mf{A}(s)=\mf{F}(s)\mf{R}(s).\]
 Since $\det \mf{F}(s)=1$, \[\det \mf{A}(s)=\det \mf{R}(s)=\prod_{i=1}^nR_{i,i}(s).\] Since $R_{i,i}(s)>0$ for $1\leq i\leq n-1$, 
 $\det \mf{A}(s)$ has the same sign as $R_{n,n}(s)$. By Theorem \ref{20250709_9},
 \[R_{n,n}(s)=\kappa_1(s)\ldots \kappa_{n-2}(s)\kappa_{n-1}(s).\] By Theorem \ref{20250709_15}, $\kappa_i(s)>0$ if $1\leq i\leq n-2$. Hence, $R_{n,n}(s)$ has the same sign as $\kappa_{n-1}(s)$. This implies that $\kappa_{n-1}(s)$ has the same sign as $\det \mf{A}(s)$.   In particular, $\kappa_{n-1}(s)=0$ if and only if $\det\mf{A}(s)=0$, if and only if $\{\bg'(s), \ldots, \bg^{(n)}(s)\}$ is a linearly dependent set.
\end{proof}

To find the generalized curvatures of     a curve $\bg:I\to\mb{R}^n$ of order $n-1$ that is parametrized by arclength, we can apply the Gram-Schmidt process to the set $\{\bg'(s), \ldots, \bg^{(n-1)}(s)\}$. In the process, we can compute   the coefficients $R_{i,i}(s)=\langle \bg^{(i)}(s),\mf{N}_{i-1}(s)\rangle$ for $2\leq i\leq n-1$.  The coefficient $R_{n,n}(s)=\langle \bg^{(n)}(s),\mf{N}_{n-1}(s)\rangle$  can then be computed using the fact that
 \[\prod_{i=1}^nR_{i,i}=\det \mf{A}(s).\] This allows us to  compute the generalized curvatures $\kappa_1(s), \ldots, \kappa_{n-1}(s)$ using Corollary \ref{20250709_14}.
 
 Although in principle one can always reparametrize a curve $\bg:I\to\mb{R}^n$ by arclength, this is by no means a simple task. 
It is desirable to have  formulas for  the generalized curvatures of a parametrized curve $\bg:I\to\mb{R}^n$ of order $n-1$ purely in terms of $\bg'(t), \ldots, \bg^{(n)}(t)$, generalizing the $n=3$ formulas 
\begin{equation}\label{202507010_2}\kappa_1(t)=\frac{\Vert\bg'(t)\times\bg''(t)\Vert}{\Vert \bg'(t)\Vert^3},\hspace{1cm}\kappa_2(t)=\frac{\left\langle\bg'(t)\times\bg''(t),\bg'''(t)\right\rangle}{\Vert \bg'(t)\times\bg''(t)\Vert^2}. \end{equation} 
Note that 
\begin{align*}\Vert\bg'(t)\times \bg''(t)\Vert^2&=\det\begin{bmatrix} \langle \bg'(t),\bg'(t)\rangle & \langle \bg'(t),\bg''(t)\rangle\\\langle \bg''(t),\bg'(t)\rangle & \langle \bg''(t),\bg''(t)\rangle\end{bmatrix},\\
\langle\bg'(t)\times\bg''(t), \bg'''(t)\rangle &=\det\begin{bmatrix}\bg'(t)&\rvline&\bg''(t)&\rvline&\bg'''(t)\end{bmatrix}.\end{align*}
To shed further lights, note that
\begin{align*}
\left(\det\begin{bmatrix}\bg'(t)&\rvline&\bg''(t)&\rvline&\bg'''(t)\end{bmatrix}\right)^2&=\det\begin{bmatrix}\hspace{0.2cm}\begin{matrix}\bg'(t)^T\\\hline \bg''(t)^T \\\hline \bg'''(t)^T\end{matrix}\hspace{0.2cm}\end{bmatrix}\begin{bmatrix}\bg'(t)&\rvline&\bg''(t)&\rvline&\bg'''(t)\end{bmatrix}\\&=\det \begin{bmatrix} \langle \bg'(t),\bg'(t)\rangle & \langle \bg'(t),\bg''(t)\rangle& \langle \bg'(t),\bg'''(t)\rangle\\\langle \bg''(t),\bg'(t)\rangle & \langle \bg''(t),\bg''(t)\rangle& \langle \bg''(t),\bg'''(t)\rangle\\\langle \bg'''(t),\bg'(t)\rangle & \langle \bg'''(t),\bg''(t)\rangle& \langle \bg'''(t),\bg'''(t)\rangle\end{bmatrix}.\end{align*}
 Motivated by this, for any parametrized curve   $\bg:I\to\mb{R}^n$,
 let 
$\mf{A}(t)$ be its canonical matrix, and consider the matrix 
\[\mf{B}(t)=\mf{A}(t)^T\mf{A}(t).\]The $(i,j)$-component of $\mf{B}(t)$ is
\[B_{i,j}(t)=  \langle\bg^{(i)}(t), \bg^{(j)}(t)\rangle.\]For $1\leq i\leq n$, let $\mf{M}_i(t)$ be the $i\times i$ matrix 
\[\mf{M}_i(t)=\begin{bmatrix}\langle\bg'(t),\bg'(t) \rangle& \cdots & \langle\bg'(t), \bg^{(i)}(t)\rangle\\
\vdots & \ddots &\vdots\\ \langle\bg^{(i)}(t),\bg'(t) \rangle& \cdots & \langle\bg^{(i)}(t), \bg^{(i)}(t)\rangle\end{bmatrix},\]which consists of the first $i$ rows and first $i$ columns of $\mf{B}(t)$. The determinant of $\mf{M}_i(t)$ is called the $i$-th leading principal minor of $\mf{B}(t)$.

Our main result is that the generalized curvatures $\kappa_1(t)$, $\ldots$, $\kappa_{n-1}(t)$ can be expressed in terms of the determinants of $\mf{M}_i(t)$. If $\bg:I\to\mb{R}^{n-1}$ is a curve of order $n-1$, for $1\leq i\leq n-1$, since $\{\bg'(t), \ldots,\bg^{i}(t)\}$ is a linearly independent set.
This implies that the matrix
\[\mf{M}_i(t)=\begin{bmatrix} \bg'(t) &\rvline & \cdots &\rvline &\bg^{(i)}(t)\end{bmatrix}^T\begin{bmatrix} \bg'(t) &\rvline & \cdots &\rvline &\bg^{(i)}(t)\end{bmatrix}\]
 is   positive definite, and so $\det\mf{M}_i(t)>0$. In fact, $\det\mf{M}_i(t)$ is the square of the volume of the parallelepiped spanned by $\bg'(t), \ldots, \bg^{(i)}(t)$. 
 
\begin{theorem}\label{20250709_16}
Let $n\geq 2$. Given  a parametrized curve $\bg:I\to\mb{R}^n$ of order $n-1$, let $\mf{A}(t)$ be its    canonical matrix, and let $\kappa_1(t), \ldots,\kappa_{n-1}(t)$ be the generalized curvatures.  For $1\leq i\leq n$, denote by $\mf{M}_i(t)$ the matrix  
\[\mf{M}_i(t)=\begin{bmatrix}\langle\bg'(t),\bg'(t) \rangle& \cdots & \langle\bg'(t), \bg^{(i)}(t)\rangle\\
\vdots & \ddots &\vdots\\ \langle\bg^{(i)}(t),\bg'(t) \rangle& \cdots & \langle\bg^{(i)}(t), \bg^{(i)}(t)\rangle\end{bmatrix}.\]
Then for $n\geq 3$,
\[\kappa_1(t)=\frac{\sqrt{\det\mf{M}_2(t)}}{\Vert\gamma'(t)\Vert^3};\]for $2\leq i\leq n-2$, 
\[\kappa_i(t)=\frac{\sqrt{ \det \mf{M}_{i+1}(t) \det \mf{M}_{i-1}(t)}}{\Vert\bg'(t)\Vert \det\mf{M}_i(t)};\] and
for $n\geq 2$,
\[\kappa_{n-1}(t)=\frac{\det\mf{A}(t)}{ \Vert\bg'(t)\Vert\det\mf{M}_{n-1}(t) }\sqrt{  \det \mf{M}_{n-2}(t)}.\]

\end{theorem}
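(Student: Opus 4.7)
The plan is to first verify the identity in the arclength case using the QR-decomposition machinery from Theorem \ref{20250709_9}, and then extend to an arbitrary parametrization via the transformation rule of Proposition \ref{20250710_3}.

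For the arclength case, the key observation is that the relation $\mf{A}(s)=\mf{F}(s)\mf{R}(s)$ restricts cleanly to leading blocks: writing $\mf{A}_i(s)$ and $\mf{F}_i(s)$ for the first $i$ columns, and $\mf{R}_i(s)$ for the top-left $i\times i$ block of $\mf{R}(s)$, the upper-triangularity of $\mf{R}(s)$ forces $\mf{A}_i(s)=\mf{F}_i(s)\mf{R}_i(s)$. Since the columns of $\mf{F}_i(s)$ are orthonormal, $\mf{M}_i(s)=\mf{R}_i(s)^T\mf{R}_i(s)$, so $\det\mf{M}_i(s)=\prod_{j=1}^i R_{j,j}(s)^2$. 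Substituting the explicit values $R_{j,j}=\kappa_1\cdots\kappa_{j-1}$ from Theorem \ref{20250709_9} (with the convention $\det\mf{M}_0=1$), a short telescoping calculation gives
\[\frac{\det\mf{M}_{i+1}(s)\,\det\mf{M}_{i-1}(s)}{(\det\mf{M}_i(s))^2}=\kappa_i(s)^2\qquad\text{for } 1\le i\le n-1.\]

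To pass to a general parametrization $\bg(t)$, I take $\tbg(s)=\bg(\phi(s))$ to be the arclength reparametrization, so that $\phi'(s)=1/\Vert\bg'(\phi(s))\Vert$. Applying Proposition \ref{20250710_3} to the first $i$ columns yields $\widetilde{\mf{A}}_i(s)=\mf{A}_i(\phi(s))\mf{U}_i(s)$, where $\mf{U}_i(s)$ is upper triangular with diagonal $\phi'(s),\phi'(s)^2,\ldots,\phi'(s)^i$, hence $\det\widetilde{\mf{M}}_i(s)=\phi'(s)^{i(i+1)}\det\mf{M}_i(\phi(s))$. Feeding this into the arclength identity above, the powers of $\phi'(s)$ combine as $(i+1)(i+2)+(i-1)i-2i(i+1)=2$, leaving exactly one overall factor $\phi'(s)^2=\Vert\bg'(t)\Vert^{-2}$; this produces equation \eqref{20250712_1}. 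Taking positive square roots then recovers the stated formulas for $\kappa_1$ and for $2\le i\le n-2$ using Theorem \ref{20250709_15}, while for $i=n-1$ one invokes Theorem \ref{20250711_9} together with the identity $\det\mf{M}_n=(\det\mf{A})^2$ to attach the correct sign and convert $\sqrt{\det\mf{M}_n}$ into $\det\mf{A}$.

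The main obstacle is the bookkeeping of the exponents of $\phi'(s)$ in the reparametrization step: each $\det\mf{M}_i$ picks up a \emph{different} power because the diagonal entries of $\mf{U}(s)$ grow linearly in the column index, and one has to check that the contributions from $\det\mf{M}_{i+1}$, $\det\mf{M}_{i-1}$, and $(\det\mf{M}_i)^2$ line up so that the surviving factor is the universal $\Vert\bg'(t)\Vert^{-2}$ appearing in \eqref{20250712_1}. Once that identity is verified, the rest of the proof is a direct application of results already established earlier in the paper.
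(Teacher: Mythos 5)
Your proposal is correct and follows essentially the same route as the paper: the QR factorization $\mf{A}=\mf{F}\mf{R}$ with the leading-block argument giving $\det\mf{M}_i=\prod_{j\leq i}R_{j,j}^2$, Corollary \ref{20250709_14} for $\kappa_i=R_{i+1,i+1}/R_{i,i}$ in arclength, Proposition \ref{20250710_3} for the change of parameter, and Theorems \ref{20250709_15} and \ref{20250711_9} together with $\det\mf{M}_n=(\det\mf{A})^2$ to fix signs. The only (cosmetic) difference is where the reparametrization scaling is tracked: the paper scales the diagonal entries via $R_{j,j}(t)=\widetilde{R}_{j,j}(s(t))\,s'(t)^j$, whereas you scale the Gram determinants via $\det\widetilde{\mf{M}}_i=\phi'^{\,i(i+1)}\det\mf{M}_i$, and your exponent count $(i+1)(i+2)+(i-1)i-2i(i+1)=2$ is indeed correct, yielding \eqref{20250712_1}.
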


\begin{proof}

Let $s:I\to J$ be an arclength function of $\bg:I\to\mb{R}^n$. Then 
\[s'(t)=\Vert\bg'(t)\Vert.\] Let $\tbg:J\to\mb{R}^n$ be the arclength reparametrization of $\bg: I\to\mb{R}^n$ so that
\[\bg(t)=\tbg(s(t)).\]
Denote by $\widetilde{\mf{A}}(s)$   the canonical matrix of $\tbg(s)$. By Proposition \ref{20250710_3}, there exists an upper triangular matrix $\mf{U}(t)=[U_{i,j}(t)]$ with diagonal entries $U_{j,j}(t)=s'(t)^j$, $1\leq j\leq n$, such that
\[\mf{A}(t)=\widetilde{\mf{A}}(s(t))\mf{U}(t).\]
Let $\{\mf{T}(t), \mf{N}_1(s), \ldots, \mf{N}_{n-1}(t)\}$ be the Frenet frame at $\bg(t)=\tbg(s(t))$, and let $\mf{F}(t)=\widetilde{\mf{F}}(s(t))$ be the corresponding frame matrix. Then there are   upper triangular matrices $\mf{R}(t)=[R_{i,j}(t)]$ and  $\widetilde{\mf{R}}(s)=[\widetilde{R}_{i,j}(s)]$ such that
\[\mf{A}(t)=\mf{F}(t)\mf{R}(t)\hspace{1cm}\text{and}\hspace{1cm}\widetilde{\mf{A}}(s)=\widetilde{\mf{F}}(s)\widetilde{\mf{R}}(s).\]
It follows  that
\[\mf{R}(t)=\widetilde{\mf{R}}(s(t))\mf{U}(t).\]
Since the matrices $\widetilde{\mf{R}}(s)$ and $\mf{U}(t)$ are upper triangular, we find that for $1\leq j\leq n$,
\[R_{j,j}(t)=\widetilde{R}_{j,j}(s(t))U_{j,j}(t)=\widetilde{R}_{j,j}(s(t))s'(t)^j.\]
When $j=1$, Theorem \ref{20250709_9} says that $\widetilde{R}_{1,1}(s)=1$. Therefore, we have
\[R_{1,1}(t)=s'(t).\]
For $1\leq i\leq n-1$, Corollary \ref{20250709_14} says that
\begin{equation}\label{20250711_7}\kappa_i(t)=\frac{\widetilde{R}_{i+1,i+1}(s(t))}{\widetilde{R}_{i,i}(s(t))}=\frac{1}{s'(t)}\frac{R_{i+1,i+1}(t)}{R_{i,i}(t)}.\end{equation}
Now consider the matrix $\mf{B}(t)=\mf{A}(t)^T\mf{A}(t)$.
Since $\mf{A}(t)=\mf{F}(t)\mf{R}(t)$ and $\mf{F}(t)$ is an orthogonal matrix, we have
\[\mf{B}(t)=\mf{R}(t)^T\mf{F}(t)^T\mf{F}(t)\mf{R}(t)=\mf{R}(t)^T \mf{R}(t).\]For  fixed $1\leq i\leq n$, we partition the matrix $\mf{R}(t)$ into 4 blocks
\[\mf{R}(t)=\begin{bmatrix} \hspace{0.2cm}\begin{matrix}\mf{V}_{i,1}(t) &\rvline & \mf{V}_{i,2}(t)\\\hline\mf{V}_{i,3}(t) &\rvline & \mf{V}_{i,4}(t) \end{matrix}\hspace{0.2cm}\end{bmatrix},\]such that
$\mf{V}_{i,1}$ is an $i\times i$ matrix. Since $\mf{R}(t)$ is upper triangular, $\mf{V}_{i,1}(t)$ is an upper triangular matrix and $\mf{V}_{i,3}(t)=\mf{0}$ is the zero matrix. Therefore,
 \[\mf{B}(t)=\begin{bmatrix} \hspace{0.2cm}\begin{matrix} \mf{V}_{i,1}(t)^T &\rvline & \mf{0}\\\hline\mf{V}_{i,2}(t)^T &\rvline & \mf{V}_{i,4}(t)^T\end{matrix}\hspace{0.2cm}\end{bmatrix}\begin{bmatrix}  \hspace{0.2cm}\begin{matrix}\mf{V}_{i,1}(t) &\rvline & \mf{V}_{i,2}(t)\\\hline\mf{0} &\rvline & \mf{V}_{i,4}(t)\end{matrix}\hspace{0.2cm}\end{bmatrix},\]
and we obtain
 \[\mf{M}_i(t)= \mf{V}_{i,1}(t)^T \mf{V}_{i,1}(t).\]
 It follows that
 \[\det \mf{M}_i(t)=\left(\det  \mf{V}_{i,1}(t)\right)^2.\]
 By definition, $\mf{V}_{i,1}(t)$ is an upper triangular $i\times i$ matrix with diagonal entries $R_{1,1}(t), \ldots, R_{i,i}(t)$. Hence,
 \[\det \mf{M}_i(t)=\left(\prod_{j=1}^iR_{j,j}(t)\right)^2.\]This gives
 \[\det\mf{M}_1(t)=R_{1,1}(t)^2=s'(t)^2,\] and
 for $1\leq i\leq n-1$,
 \begin{equation}\label{20250711_8}R_{i+1,i+1}(t)^2=\frac{\det \mf{M}_{i+1}(t)}{\det\mf{M}_i(t)}.\end{equation}
 Using the fact that $ s'(t) =\Vert\bg'(t)\Vert$, \eqref{20250711_7} and \eqref{20250711_8} give
 \[\kappa_1(t)^2=\frac{1}{s'(t)^2}\frac{R_{2,2}(t)^2}{R_{1,1}(t)^2}=\frac{1}{\Vert\bg'(t)\Vert^4}\frac{\det\mf{M}_2(t)}{\det\mf{M}_1(t)}=\frac{\det\mf{M}_2(t)}{\Vert \bg'(t)\Vert^6}.\]
 Theorem \ref{20250709_15} says that if $n\geq 3$, $\kappa_1(t)>0$. Hence, if $n\geq 3$,
 \[\kappa_1(t)=\frac{\sqrt{\det\mf{M}_2(t)}}{\Vert\bg'(t)\Vert^3}.\]
 If $2\leq i\leq n-1$, \eqref{20250711_7}  and \eqref{20250711_8} give
 \begin{equation}\label{20250711_10}\kappa_i(t)^2=\frac{1}{\Vert\bg'(t)\Vert^2}\frac{ \det \mf{M}_{i+1}(t) \det \mf{M}_{i-1}(t)}{[\det\mf{M}_i(t)]^2}.\end{equation}
If $2\leq i\leq n-2$, Theorem \ref{20250709_15} says that $\kappa_i(t)>0$. Therefore, when $2\leq i\leq n-2$,
\[\kappa_i(t)=\frac{\sqrt{ \det \mf{M}_{i+1}(t) \det \mf{M}_{i-1}(t)}}{\Vert\bg'(t)\Vert \det\mf{M}_i(t)}.\] Finally, we notice that
\[\det\mf{M}_n(t)=\det\mf{B}(t)=\left(\det \mf{A}(t)\right)^2.\]
Therefore, \eqref{20250711_10} gives
\[\kappa_{n-1}(t)^2=\frac{1}{\Vert\bg'(t)\Vert^2}\frac{[\det\mf{A}(t)]^2\det \mf{M}_{n-2}(t)}{[\det\mf{M}_{n-1}(t)]^2}.\]
By Theorem \ref{20250711_9}, $\kappa_{n-1}(t)$ has the same sign as $\det\mf{A}(t)$. It follows that
\[\kappa_{n-1}(t)=\frac{\det\mf{A}(t)}{ \Vert\bg'(t)\Vert\det\mf{M}_{n-1}(t) }\sqrt{  \det \mf{M}_{n-2}(t)}.\]
\end{proof}

Note that since $\det\mf{M}_1(t)=\Vert\bg'(t)\Vert^2$, if $n\geq 4$, we can simplify $\kappa_2(t)$ to
\[\kappa_2(t)=\frac{\sqrt{\det\mf{M}_3(t)}}{\det\mf{M}_2(t)}.\]
If $n=3$, we have the classical formula
\[\kappa_2(t)=\frac{\det\mf{A}(t)}{\det\mf{M}_2(t)}.\]

A disguised form of the results in Theorem \ref{20250709_16} have been obtained in \cite{Gerretsen, Gluck, Gutkin}, where $\sqrt{\det\mf{M}_i(t)}$ is written as the volume   of the parallelepiped spanned by $\bg'(t), \ldots, \bg^{(i)}(t)$. In fact, if $\{\mf{v}_1, \ldots, \mf{v}_r\}$ is a linearly independent set in $\mb{R}^n$,   $\{\mf{u}_1, \ldots, \mf{u}_r\}$ is the orthonormal set obtained by applying the Gram-Schmidt process to $\{\mf{v}_1, \ldots, \mf{v}_r\}$, and
\[R_{i,j}=\langle \mf{u}_i,\mf{v}_j\rangle,\] then $[R_{i,j}]$ is an upper triangular matrix with positive diagonal entries $R_{j,j}$, $1\leq j\leq r$. Moreover, for $1\leq j\leq r$,
\[\mf{v}_j=\sum_{i=1}^{j}R_{i,j}\mf{u}_i=\sum_{i=1}^{j-1}R_{i,j}\mf{u}_i+R_{j,j}\mf{u}_j.\]
This implies that
\[\mf{w}_j=\mf{v}_j-\sum_{i=1}^{j-1}\langle \mf{u}_i,\mf{v}_j\rangle\mf{u}_i=R_{j,j}\mf{u}_j.\]
Since $\mf{w}_j$ is the component of $\mf{v}_j$ perpendicular to the subspace spanned by $\mf{v}_1, \ldots, \mf{v}_{j-1}$, we find that the volume of the parallelepiped spanned by $\mf{v}_1, \ldots, \mf{v}_r$ is
\[\Vert\mf{w}_1\Vert \cdots\Vert\mf{w}_r\Vert=R_{1,1}\cdots R_{r,r}.\]
 
 Theorem \ref{20250709_16} provides an efficient way to compute the generalized curvatures of a curve $\bg:I\to\mb{R}^n$ under any parametrization. One first computes the canonical matrix $\mf{A}(t)$, and then the matrix $\mf{B}(t)=\mf{A}(t)^T\mf{A}(t)$.  From this, one can extract the  matrices $\mf{M}_i(t)$ and compute the curvatures $\kappa_1(t), \ldots, \kappa_{n-1}(t)$ using their determinants and the formulas given in  Theorem \ref{20250709_16}.  In the process, there is no need to apply the Gram-Schmidt algorithm.
 
 As an example, let us consider the curve in $\mb{R}^4$ given by $\bg:\mb{R}\to\mb{R}^4$,
 \[\bg(t)=(t,t^2, t^3, t^4).\]
 Using any computer algebra that can perform symbolic computation, we can easily  find that the canonical matrix of $\bg:I\to\mb{R}^4$ is
 \[\mf{A}(t)=\begin{bmatrix} 1 & 0 & 0 & 0\\2t & 2 & 0 & 0\\
 3t^2 & 6t & 6 & 0\\
 4t^3 & 12t^2 & 24t & 24\end{bmatrix}.\] It follows that
 \[\mf{B}(t)=\mf{A}(t)^T\mf{A}(t)= \begin{bmatrix} 1+4 t^2+9 t^4+16 t^6 & 4 t+18 t^3+48 t^5 & 18 t^2+96 t^4 & 96 t^3\\ 4 t+18 t^3+48 t^5 & 4+36 t^2+144 t^4 & 36 t+288 t^3 & 288 t^2\\ 18 t^2+96 t^4 & 36 t+288 t^3 & 36+576 t^2 & 576 t\\ 96 t^3 & 288 t^2 & 576 t & 576 \end{bmatrix}.
\]
From this, we obtain the first 3 leading principal minors of $\mf{B}(t)$, and the determinant of $\mf{A}(t)$ as  
\begin{align*}\det\mf{M}_1(t)&=1+4 t^2+9 t^4+16 t^6=\Vert\bg'(t)\Vert^2,\\
\det\mf{M}_2(t)&=4+36 t^2+180 t^4+256 t^6+144 t^8,\\
\det\mf{M}_3(t)&=144+2304 t^2+5184 t^4+2304 t^6,\\
\det\mf{A}(t)&=288.
\end{align*}By Theorem \ref{20250709_16}, the generalized curvatures $\kappa_1(t), \kappa_2(t), \kappa_3(t) $ are given by
\begin{align*}
\kappa_1(t)&= \frac{\sqrt{\det\mf{M}_2(t)}}{\Vert \bg'(t)\Vert^3}=\frac{\sqrt{4+36 t^2+180 t^4+256 t^6+144 t^8}}{\left(1+4 t^2+9 t^4+16 t^6\right)^{\frac{3}{2}}},\\
\kappa_2(t)&=\frac{ \sqrt{\det\mf{M}_3(t)}}{  \det\mf{M}_2(t)}=\frac{\sqrt{144+2304 t^2+5184 t^4+2304 t^6}}{4+36 t^2+180 t^4+256 t^6+144 t^8},\\
\kappa_3(t)&=\frac{\det\mf{A}(t)\sqrt{\det\mf{M}_2(t)}}{\Vert\bg'(t)\Vert \det\mf{M}_3(t)}\\&=\frac{288}{144+2304 t^2+5184 t^4+2304 t^6}\sqrt{\frac{4+36 t^2+180 t^4+256 t^6+144 t^8}{1+4 t^2+9 t^4+16 t^6}}.
\end{align*}

\bigskip
\section{Generalizations}
In this section, we consider the general case of a curve $\bg:I\to\mb{R}^n$ which does not necessary have order $n-1$. 

Given a curve $\bg:I\to\mb{R}^n$, we define the canonical matrix $\mf{A}(t)$ and consider the matrix 
$\mf{B}(t)=\mf{A}(t)^T\mf{A}(t)$. One can then compute  $\det\mf{A}(t)$ and the leading principal minors of $\mf{B}(t)$ given by
\[\det\mf{M}_i(t)=\det \begin{bmatrix}\langle\bg'(t),\bg'(t) \rangle& \cdots & \langle\bg'(t), \bg^{(i)}(t)\rangle\\
\vdots & \ddots &\vdots\\ \langle\bg^{(i)}(t),\bg'(t) \rangle& \cdots & \langle\bg^{(i)}(t), \bg^{(i)}(t)\rangle\end{bmatrix},\hspace{1cm}1\leq i\leq n.\]
For $1\leq i\leq n$, define the set $I_i$ as 
\[I_i=\left\{t\in I |  \det\mf{M}_i(t)=0\right\}.\]
Since $\det\mf{M}_i(t)$ is a continuous function, $I_i$ must be a closed subset of $I$. Note that $t\in I_i$ if and only if the set $\{\bg'(t), \ldots, \bg^{(i)}(t)\}$ is linearly dependent. Hence, $I_1=\emptyset$, and 
\[I_1\subset I_2\subset \cdots \subset I_n\subset I.\] Since $\{\bg'(t), \ldots, \bg^{(n+1)}(t)\}$ must be a linearly dependent set, we extrapolate and define $I_{n+1}=I$. 

For any $1\leq k\leq n$, 
the curve has order $k$ if and only if $I_k=\emptyset$. 

Since $I_1=\emptyset$ and $I_{n+1}=I$,  there exists $1\leq r\leq n$ such that $I_{r+1}=I$ and $I_r\neq I$, then $\widetilde{I}_r=I\setminus I_r$ is an open nonempty  subset of real numbers. It can be written as a disjoint union of countably many open intervals. We can restrict the curve $\bg$ to each of these open intervals  and consider them separately.
 Thus, it is sufficient to consider a curve $\bg:I\to\mb{R}^n$   so that for all $t\in I$,    $\bg'(t), \ldots, \bg^{(r)}(t)$ are linearly independent, but  $\bg'(t), \ldots, \bg^{(r)}(t), \bg^{(r+1)}(t)$ are linearly dependent. 
 
 If $r=n-1$ or $r=n$, we can define the Frenet frame as before. 
 
 If $1\leq r\leq n-2$, we can still define the orthonormal set $\{\mf{T}(t), \mf{N}_1(t), \ldots, \mf{N}_{r-1}(t)\}$ by applying the Gram-Schmidt process to the linearly independent set $\bg'(t), \ldots, \bg^{(r)}(t)$. Then one can define the curvatures $\kappa_1(t), \ldots, \kappa_{r-2}(t)$ as before. They are positive valued.
 
  Since $\bg'(t), \ldots, \bg^{(r)}(t)$ are linearly independent, but  $\bg'(t), \ldots, \bg^{(r)}(t), \bg^{(r+1)}(t)$ are linearly dependent, we find that
 \[\bg^{(r+1)}(t)\in\text{span} \{\bg'(t),\ldots,\bg^{(r)}(t)\}=\text{span} \{\mf{T}(t), \mf{N}_1(t), \ldots, \mf{N}_{r-1}(t)\}.\]
 This implies that 
 \[ \mf{N}_{r-1}(t)\in \text{span} \{\bg'(t),\ldots,\bg^{(r)}(t)\},\]
 and so
 \[\mf{N}_{r-1}'(t)\in \text{span} \{\bg'(t),\ldots,\bg^{(r)}(t),\bg^{(r+1)}(t)\}=\text{span} \{\mf{T}(t), \mf{N}_1(t), \ldots, \mf{N}_{r-1}(t)\}.\]
 The Frenet-Serret formulas are
 \begin{align*}
 \frac{d\mf{T}}{ds}&=\kappa_1\mf{N}_1,\\
 \frac{d\mf{N}_1}{ds}&=-\kappa_1\mf{T}+\kappa_2\mf{N}_2,\\
 \frac{d\mf{N}_i}{ds}&=-\kappa_{i}\mf{N}_{i-1}+\kappa_{i+1}\mf{N}_{i+1},\hspace{1cm}2\leq i\leq r-2,\\
 \frac{d\mf{N}_{r-1}}{ds}&=-\kappa_{r-1}\mf{N}_{r-2}.
 \end{align*}
 The formulas for $\kappa_i(t)$, $1\leq i\leq r-1$, given in Theorem \ref{20250709_16}, still hold.

\bigskip

\end{document}